\def\frk{\frak}               
\def\mm{{\frk m}}
\def\Phi{{\frk n}}
\def\Phi{{\frk N}}
\def\opn#1#2{\def#1{\operatorname{#2}}} 
\opn\chara{char} \opn\length{\ell} \opn\pd{pd} \opn\rk{rk}
\opn\projdim{proj\,dim} \opn\injdim{inj\,dim} \opn\rank{rank}
\opn\depth{depth} \opn\sdepth{sdepth} \opn\fdepth{fdepth}
\opn\grade{grade} \opn\height{height} \opn\embdim{emb\,dim}
\opn\codim{codim}  \opn\min{min} \opn\max{max}
\opn\Tr{Tr} \opn\bigrank{big\,rank}
\opn\superheight{superheight}\opn\lcm{lcm}
\opn\trdeg{tr\,deg}
\opn\reg{reg} \opn\lreg{lreg} \opn\ini{in} \opn\lpd{lpd}
\opn\size{size}
\opn\div{div} \opn\Div{Div} \opn\cl{cl} \opn\Cl{Cl}
\opn\Spec{Spec} \opn\Supp{Supp} \opn\supp{supp} \opn\Sing{Sing}
\opn\Ass{Ass} \opn\Min{Min}
\opn\Ann{Ann} \opn\Rad{Rad} \opn\Soc{Soc}
\opn\Im{Im} \opn\Ker{Ker} \opn\Coker{Coker} \opn\Am{Am}
\opn\Hom{Hom} \opn\Tor{Tor} \opn\Ext{Ext} \opn\End{End}
\opn\Aut{Aut} \opn\id{id}  \opn\deg{deg}
\opn\nat{nat}
\opn\pff{pf}
\opn\Pf{Pf} \opn\GL{GL} \opn\SL{SL} \opn\mod{mod} \opn\ord{ord}
\opn\Gin{Gin} \opn\Hilb{Hilb}
\opn\aff{aff} \opn\con{conv} \opn\relint{relint} \opn\st{st}
\opn\lk{lk} \opn\cn{cn} \opn\core{core} \opn\vol{vol}
\opn\link{link} \opn\star{star}
\opn\gr{gr}
\def\pot#1#2{#1[\kern-0.28ex[#2]\kern-0.28ex]}
\opn\dirlim{\underrightarrow{\lim}}
\opn\inivlim{\underleftarrow{\lim}}
\let\to=\rightarrow
\def\Implies{\ifmmode\Longrightarrow \else
        \unskip${}\Longrightarrow{}$\ignorespaces\fi}
\def\implies{\ifmmode\Rightarrow \else
        \unskip${}\Rightarrow{}$\ignorespaces\fi}
\def\iff{\ifmmode\Longleftrightarrow \else
        \unskip${}\Longleftrightarrow{}$\ignorespaces\fi}
\newtheorem{Theorem}{Theorem}[]
\newtheorem{Lemma}[Theorem]{Lemma}
\newtheorem{Corollary}[Theorem]{Corollary}
\newtheorem{Proposition}[Theorem]{Proposition}
\newtheorem{Example}[Theorem]{Example}
\let\epsilon\varepsilon
\let\phi=\varphi
\let\kappa=\varkappa
\def\qed{\ifhmode\textqed\fi
      \ifmmode\ifinner\quad\qedsymbol\else\dispqed\fi\fi}
\def\textqed{\unskip\nobreak\penalty50
       \hskip2em\hbox{}\nobreak\hfil\qedsymbol
       \parfillskip=0pt \finalhyphendemerits=0}
\def\dispqed{\rlap{\qquad\qedsymbol}}
\opn\dis{dis}
\def\pnt{{\raise0.5mm\hbox{\large\bf.}}}
\opn\Lex{Lex}
\begin{document}

\title{  Constructive N\'eron Desingularization of algebras with big smooth locus.}

\author{Zunaira Kosar, Gerhard Pfister and Dorin Popescu }
\thanks{}

\address{Zunaira Kosar, Abdus Salam School of Mathematical Sciences,GC University, Lahore, Pakistan}
\email{ zunairakosar@gmail.com}

\address{Gerhard Pfister,  Department of Mathematics, University of Kaiserslautern, Erwin-Schr\"odinger-Str., 67663 Kaiserslautern, Germany}
\email{pfister@mathematik.uni-kl.de}

\address{Dorin Popescu, Simion Stoilow Institute of Mathematics of the Romanian Academy, Research unit 5,
University of Bucharest, P.O.Box 1-764, Bucharest 014700, Romania}
\email{dorin.popescu@imar.ro}

\begin{abstract} An algorithmic proof of the General N\'eron Desingularization theorem and its uniform version is given for morphisms with big smooth locus. This generalizes the results for the one-dimensional case (cf.  \cite{PP}, \cite{KPP}).

  {\it Key words } : Smooth morphisms,  regular morphisms\\
 {\it 2010 Mathematics Subject Classification: Primary 13B40, Secondary 14B25,13H05,13J15.}
\end{abstract}

\maketitle

\vskip 0.5 cm

\section*{Introduction}

Motivated to generalize Artin's Approximation Theorem (cf. \cite{A}) to excellent  Henselian 
rings the third author developed a powerful tool, the General N\'eron Desingularization (cf.
\cite{P}). This result was discussed and used later by many other authors (cf. \cite{An}, \cite{S}, \cite{Sp}). The proof of the Desingularization was not constructive. In this paper 
we want to give an algorithm to compute N\'eron Desingularization for an important special case. We begin recalling some standard definitions.      

A ring morphism $u:A\to A'$ of Noetherian rings has  {\em regular fibers} if for all prime ideals $P\in \Spec A$ the ring $A'/PA'$ is a regular  ring, i.e. its localizations are regular local rings.
It has {\em geometrically regular fibers}  if for all prime ideals $P\in \Spec A$ and all finite field extensions $K$ of the fraction field of $A/P$ the ring  $K\otimes_{A/P} A'/PA'$ is regular.

A flat morphism of Noetherian rings $u$ is {\em regular} if its fibers are geometrically regular. If $u$ is regular of finite type then $u$ is called {\em smooth}. A localization of a smooth algebra is called {\em essentially smooth}. A Henselian Noetherian local ring $A$ is {\em excellent} if the completion map $A\to \hat A$ is regular.

\begin{Theorem} (General N\'eron Desingularization, Popescu \cite{P}, \cite{P1}, Andr\'e \cite{An}, Swan \cite{S}, Spivakovsky \cite{Sp})\label{gnd}  Let $u:A\to A'$ be a  regular morphism of Noetherian rings and $B$ an  $A$-algebra of finite type. Then  any $A$-morphism $v:B\to A'$   factors through a smooth $A$-algebra $C$, that is $v$ is a composite $A$-morphism $B\to C\to A'$.
\end{Theorem}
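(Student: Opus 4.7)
The plan is to construct $C$ by iteratively improving a presentation of $B$ until a suitable Jacobian minor becomes a unit in $A'$ along the image of $v$. Fix a presentation $B\iso A[X_1,\ldots,X_n]/(f_1,\ldots,f_r)$; the $A$-morphism $v$ then corresponds to a tuple $\yb=(y_1,\ldots,y_n)\in (A')^n$ with $f_j(\yb)=0$ for all $j$. The algebra $C$ will ultimately appear as $A[X,Z]/J$ for auxiliary indeterminates $Z=(Z_1,\ldots,Z_m)$ and an ideal $J\supset I\cdot A[X,Z]$, equipped with an $A$-map $w\:C\to A'$ extending $v$, chosen so that a specific maximal minor of the Jacobian of $J$ is sent to a unit in $A'$ by $w$. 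This realises $C$ as standard smooth over $A$ after inverting one element, and hence as a smooth $A$-algebra in the sense of the theorem.

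The bookkeeping device is the smooth-locus ideal $H_{B/A}\subset B$, generated by products $\Delta_{\bb}\cdot((\bb):I)$ where $\bb$ ranges over $s$-subsets of $\{f_1,\ldots,f_r\}$ and $\Delta_{\bb}$ over the $s\times s$-minors of the Jacobian of $\bb$ with respect to $s$ of the $X_i$. If $v(H_{B/A})=A'$, a single principal localisation of $B$ already furnishes $C$. Otherwise $v(H_{B/A})\subset\aa$ for a proper ideal $\aa\subset A'$, and I aim to produce a finite-type $A$-algebra $B_1$ together with a factorisation $v\:B\to B_1\to A'$ such that the image $v_1(H_{B_1/A})$ in $A'$ strictly dominates $\aa$. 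Noetherianity of $A'$ forces the process to terminate after finitely many such steps at $v_k(H_{B_k/A})=A'$, and $B_k$ (suitably localised) plays the role of $C$.

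The construction of the improvement step $B\to B_1$ is the heart of the proof, and I would model it on N\'eron's original argument for discrete valuation rings. Starting from $\yb$ with defect concentrated in $\aa$, regularity of $u$ supplies, through geometric regularity of the fibers combined with flatness, enough room to perturb $\yb$ to a tuple $\yb+\ab\zb$ whose substitution into $f_1,\ldots,f_r$ --- after Taylor expansion --- yields new relations whose linear part in $\zb$ is controlled by a chosen Jacobian minor of a complete-intersection subsystem extracted from $(f_1,\ldots,f_r)$. Adjoining the variables $Z_i$ together with these new relations to $A[X]/I$ defines $B_1$. The principal obstacle is to verify simultaneously that $v$ factors through $B_1$, that $v_1(H_{B_1/A})$ strictly enlarges $\aa$, and that $B_1$ remains of finite type. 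This forces an initial reduction to the complete-intersection case, where $H_{B/A}$ is generated purely by the maximal minors of the Jacobian of $(f_1,\ldots,f_r)$; executing this CI reduction while preserving the factorisation of $v$, and then patching fibrewise improvements across $\Spec A$ into a global one via Artin--Rees and flatness, constitutes the technical core shared by the treatments of Popescu, Andr\'e, Swan and Spivakovsky.
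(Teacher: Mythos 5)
There is a genuine gap, and it is worth noting first that the paper itself does not prove Theorem \ref{gnd}: the statement is quoted from \cite{P}, \cite{P1}, \cite{An}, \cite{S}, \cite{Sp}, and the constructive results of the paper (Proposition \ref{proposition} and Theorem \ref{m}) treat only the special case in which $A$ and $A'$ are local of the same dimension, $A'$ is Henselian with maximal ideal $\mm A'$, and $v(H_{B/A}A')$ is $\mm A'$-primary; there the argument is an induction on $\dim A$, passing to $\bar A=A/(d^{2e+1})$, lifting a smooth $\bar A$-algebra by the Implicit Function Theorem, and concluding with the explicit $(h,g)$-construction of Proposition \ref{proposition}. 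Your proposal must therefore stand on its own as a proof of the general statement, and it does not.

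The missing piece is exactly the improvement step, which is the entire content of the theorem. Your termination scheme (strictly enlarge $v(H_{B/A})A'$ at each stage and appeal to Noetherianity of $A'$) is the classical N\'eron strategy and is fine as a skeleton; but the claim that whenever $v(H_{B/A})A'\neq A'$ one can build a finite type $A$-algebra $B_1$ through which $v$ factors with $v_1(H_{B_1/A})A'$ strictly larger is asserted, not proved. You describe it as a Taylor-expansion perturbation ``modeled on N\'eron's original argument for discrete valuation rings,'' with the verification deferred to ``the technical core shared by the treatments of Popescu, Andr\'e, Swan and Spivakovsky'' --- but that core \emph{is} the theorem. N\'eron's perturbation succeeds because over a DVR the defect ideal is principal and one divides by a uniformizer; in general one needs, in every known proof, substantial further input: Noetherian induction over the primes containing $v(H_{B/A})A'$, a reduction to the situation where some $d\equiv\sum_jN_jM_j$ modulo $I$ with $N_j\in((f):I)$ and $M_j$ Jacobian minors (the role of Lemma \ref{lemma} here), control of geometric regularity in positive characteristic (separability of residue field extensions, or Andr\'e homology), and a lifting construction in which flatness and a condition like $(0:_Ad^e)=(0:_Ad^{e+1})$ are what make the new relations $g,h$ define an algebra that is both smooth and still receives $v$ (compare the proof of Proposition \ref{proposition}). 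None of this is carried out or even reduced to checkable lemmas, so the proposal does not establish the theorem. A smaller slip: if $v(H_{B/A})A'=A'$ you cannot in general finish with ``a single principal localisation of $B$,'' since no single $g\in H_{B/A}$ need map to a unit when $A'$ is not local; one must instead pass to something like $B[Z_1,\ldots,Z_q]/(\sum_ig_iZ_i-1)$ with $g_i\in H_{B/A}$.
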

Constructive General N\'eron Desingularization for the case when the rings $A$ and $A'$ are one-dimensional local rings, is given in \cite{AP}, \cite{PP} and \cite{KPP}, the two dimensional case is partially done in \cite{PP1}. The purpose of this paper is to find a constructive proof  for the case when rings $A$ and $A'$ are of dimension $m$ and the smooth locus of $B\to A'$ is big. We proceed using induction on the dimension of rings, with the induction step given in Proposition\  \ref{proposition}. In the Section 2 we prove a uniform General N\'eron Desingularization for $m$-dimensional local Cohen-Macaulay rings and some  consequences of it. We also give an algorithm to find a uniform General N\'eron Desingularization using {\sc Singular}.

\section{Constructive  N\'eron Desingularization}

 Let $u:A\to A'$ be a flat morphism of Noetherian local rings of dimension $m$. Suppose that  the maximal ideal $\mm$ of $A$ generates the maximal ideal of $A'$, $A'$ is Henselian and $u$ is a regular morphism.

  Let $B=A[Y]/I$, $Y=(Y_1,\ldots,Y_n)$. If $f=(f_1,\ldots,f_r)$, $r\leq n$ is a system of polynomials from $I$ then we can define the ideal $\Delta_f$ generated by all $r\times r$-minors of the Jacobian matrix $(\partial f_i/\partial Y_j)$.   After Elkik \cite{El} let $H_{B/A}$ be the radical of the ideal $\sum_f ((f):I)\Delta_fB$, where the sum is taken over all systems of polynomials $f$ from $I$ with $r\leq n$.
 $H_{B/A}$ defines the non smooth locus of $B$ over $A$.
  $B$ is {\em standard smooth} over $A$ if  there exists  $f$ in $I$ as above such that $B= ((f):I)\Delta_fB$.

  The aim of this section is to give an algorithmic  proof of the following theorem.
\vskip 0.3 cm
 \begin{Theorem} \label{m} Any $A$-morphism $v:B\to A'$  such that $v(H_{B/A}A')$ is $\mm A'$-primary factors through a standard smooth $A$-algebra $B'$.
 \end{Theorem}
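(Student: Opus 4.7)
\emph{Plan.} I would proceed by induction on $m=\dim A$. The base case $m=1$ is the constructive General N\'eron Desingularization established in \cite{PP} and \cite{KPP}; the inductive step is exactly what Proposition~\ref{proposition} is designed to provide.

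For the step from dimension $m-1$ to dimension $m$, the plan is to drop the dimension by one via a well-chosen non-zero-divisor. Since $v(H_{B/A}A')$ is $\mm A'$-primary, it has height $m$ in $A'$ and therefore is not contained in any minimal prime. I would pick $a\in\mm$ such that $v(a)$ is a non-zero-divisor in $A'$ (so $\dim A'/v(a)A'=m-1$), and verify that the induced morphism $\bar v\colon \bar B:=B/(aB)\to \bar A':=A'/v(a)A'$, viewed as an $\bar A:=A/(a)$-algebra morphism, still satisfies that $\bar v(H_{\bar B/\bar A}\bar A')$ is primary to the maximal ideal $\bar\mm$ of $\bar A'$. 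This preservation holds because the colon ideals and Jacobian minors defining $H_{\bar B/\bar A}$ contain the mod-$a$ specialisations of those defining $H_{B/A}$, so that a generic enough choice of $a$ (avoiding the minimal primes of $A'$) transmits the $\mm$-primary condition.

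Applying the induction hypothesis to $\bar v$ produces a standard smooth $\bar A$-algebra $\bar B'$ with a factorisation $\bar B\to \bar B'\to \bar A'$. The algebra $\bar B'$ is given by an explicit presentation $\bar A[Z]/(\bar g)$ together with a Jacobian minor $\bar M$ and a colon certificate for the standard-smoothness identity $\bar B'=((\bar g):\bar I)\Delta_{\bar g}\bar B'$. I would lift this presentation arbitrarily to $g\subset A[Z]$ and set $B':=A[Z]/(g)$; the lifted minor $M$ of $g$ reduces to $\bar M$, and after a routine adjustment of the presentation (possibly adjoining inverse witnesses as in Elkik's framework) the standard-smoothness identity for $\bar B'$ lifts to one for $B'$ over $A$.

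The principal obstacle is constructing an $A$-algebra map $B'\to A'$ compatible with $v$. For this I would lift the images $\bar z_i\in\bar A'$ of the variables $Z_i$ under $\bar B'\to\bar A'$ to arbitrary preimages $z_i\in A'$; then $g(z)\in v(a)A'\subseteq \mm A'$, so $z$ is an approximate solution of $g=0$, and $M(z)$, reducing modulo $v(a)$ to the unit $\bar M(\bar z)$ in $\bar A'$, is itself a unit in $A'$. Because $A'$ is Henselian, Hensel's Lemma upgrades $z$ to an exact solution $z^*\equiv z \pmod{v(a)A'}$ with $g(z^*)=0$, producing the required map $B'\to A'$. Compatibility with $v$ on generators of $B$ is verified modulo $v(a)^nA'$ for all $n$ and concluded by Krull's intersection theorem inside $A'$. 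The delicate point is that the Hensel step, the lift of the standard-smoothness identity, and the compatibility with $v$ all rely essentially on $v(H_{B/A}A')$ being $\mm A'$-primary, which both guarantees that the Jacobian minor at the approximate solution is a unit and leaves enough approximation room to make the final factorisation commute with $v$.
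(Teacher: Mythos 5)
Your proposal has a genuine gap at exactly the point where the paper's Proposition~\ref{proposition} does all the work. After you apply the induction hypothesis modulo $v(a)$, lift the presentation of $\bar B'$ to a smooth $A$-algebra $B'=A[Z]/(g)$ and lift the point by Hensel's Lemma, you only obtain a smooth $A$-algebra with a map $B'\to A'$ and a factorisation of $\bar v$ modulo $v(a)$. Nothing in your construction produces an $A$-algebra map $B\to B'$ at all (lifting the images of $Y$ arbitrarily does not send $I$ to $0$ in $B'$), let alone one whose composite with $B'\to A'$ is exactly $v$; the Hensel step fixes the solution once and gives agreement with $v$ only modulo $v(a)$, so there is no mechanism yielding agreement modulo $v(a)^nA'$ for all $n$, and the appeal to Krull's intersection theorem has nothing to bite on. Closing this gap is precisely the content of Proposition~\ref{proposition}: one forms $E=D[Y,T]/(I,g,h)$ over $B\otimes_A D$, where $h$ and $g$ are explicit correction equations built from the adjoints $G_j$ of completed Jacobian matrices of a system $f\subset I$ and from the identity $d\equiv\sum_j M_jN_j$ modulo $I$, and one shows $I\subset(h,g)$ after inverting $s,s',s''$, so that $E_{ss's''}$ is simultaneously a $B$-algebra, standard smooth over $A$, and maps to $A'$ compatibly with $v$.

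This also explains why your choice of modulus is not adapted. You reduce modulo a generic non-zero-divisor $a\in\mm$, but the correction construction requires the element one divides by to be (a power of) an element of $H_{B/A}$, realised modulo $I$ as a sum of $r\times r$ Jacobian minors times colon elements of a system $f$ from $I$; arranging this standard form is the purpose of the reductions via the symmetric algebra $S_B(I/I^2)$ and Lemma~\ref{lemma}, which your plan omits entirely, and one must work modulo $d^{2e+1}$ (with $e$ defined by $(0:_Ad^e)=(0:_Ad^{e+1})$), not merely modulo $d$, to have enough room to solve for the correction variables $T$ and to kill torsion. Finally, the hypothesis that $v(H_{B/A}A')$ is $\mm A'$-primary does not make any Jacobian minor of $f\subset I$ a unit at the approximate solution as you assert; it is used to extract a system of parameters $\gamma_1,\dots,\gamma_m$ of $A$ inside $v(H_{B/A})A'\cap A$ (after enlarging $B$ by the auxiliary variables $Z^{(k)}$), whose powers serve as the successive moduli $d_k^{2e+1}$ in the induction. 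The unit minor you invoke belongs to the standard smooth algebra produced by the induction hypothesis, not to $B$.
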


 To prove the above theorem we need the following proposition.

 \begin{Proposition}\label{proposition}
 Let $A$ and $A'$ be Noetherian local rings of dimension $m$ and $u:A\to A'$ be a regular morphism. Suppose that $A'$ is Henselian. Let $B=A[Y]/I$, $Y=(Y_1,\ldots,Y_n)$, $f=(f_1,\ldots,f_r)$, $r\leq n$ be a system of polynomials from $I$ as above, $(M_j)_{j\in [q]}$  some $r\times r$-minors  \footnote{ We use the notation $[q]=\{1,\ldots,q\}$.}  of the Jacobian matrix $(\partial f_i/\partial Y_{j'})$,  $(N_j)_{j \in [q]} \in  ((f):I)$ and set $P:=\sum _{j=1}^q N_jM_j$. Let  $v:B\to A'$ be an $A$-morphism. Suppose that
\begin{enumerate}
 \item{} there exist an element $d\in A$ such that $d\equiv P $ modulo $I$ and

 \item{} there exist a smooth $A$-algebra $D$ and an $A$-morphism $\omega:D\to A'$ such that $\Im v\subset \Im\omega +d^{2e+1}A'$ and for ${\bar A}=A/(d^{2e+1})$ (defining e by $(0:_Ad^e)=(0:_Ad^{e+1})$) the map $\bar{v}={\bar A} {\otimes}_{A} v: \bar{B}=B/d^{2e+1}B \to \bar{A}'=A'/d^{2e+1}A'$ factors through $\bar{D}=D/d^{2e+1}D$.
\end{enumerate}
 Then there exist a $B$-algebra $B'$ which is standard smooth over $A$ such that $v$ factors through $B'$.
 \end{Proposition}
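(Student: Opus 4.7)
The plan is to imitate the General N\'eron Desingularization construction, realizing $B'$ as a deformation of the given smooth $A$-algebra $D$ by $n$ new variables and $r$ new defining relations built from the Jacobian data $(M_j,N_j)$.

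First, I would lift the approximate factorization from hypothesis~(2). The $A$-algebra map $\bar\psi\colon\bar B\to\bar D$ with $\bar\omega\circ\bar\psi=\bar v$ picks out elements $\bar\psi(Y_1),\ldots,\bar\psi(Y_n)\in\bar D$, which lift to some $\psi=(\psi_1,\ldots,\psi_n)\in D^n$ satisfying $f_i(\psi)\in d^{2e+1}D$ for each $i$ and $\omega(\psi_k)\equiv v(Y_k)\pmod{d^{2e+1}A'}$ for each $k$. Flatness of $D$ over $A$ transports the stabilization $(0:_Ad^e)=(0:_Ad^{e+1})$ to $D$, so division by $d^{e+1}$ on elements of $d^{e+1}D$ is unambiguous; one then writes $f_i(\psi)=d^{2e+1}c_i$ with $c_i\in D$.

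Next, in the polynomial ring $D[T_1,\ldots,T_n]$ I would perform the change of variables $Y_k\mapsto \psi_k+d^{e+1}\sigma_k(T)$, with $\sigma$ a $D$-linear substitution whose coefficient matrix is built from the classical adjugates $G^{(j)}$ of the Jacobian submatrices associated to each minor $M_j$, weighted by $N_j$. Using the adjugate identity $G^{(j)}J_{S_j}(\psi)=M_j(\psi)I_r$, the inclusion $N_jI\subseteq(f)$, and the hypothesis $P=\sum_jN_jM_j\equiv d\pmod I$, a Taylor expansion should show that $f_i(\psi+d^{e+1}\sigma(T))\in d^{e+1}D[T]$; set $h_i(T):=f_i(\psi+d^{e+1}\sigma(T))/d^{e+1}$ and $B':=D[T_1,\ldots,T_n]/(h_1,\ldots,h_r)$. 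A direct Jacobian computation should exhibit an $r\times r$ minor of $(\partial h_i/\partial T_k)$ that is a unit in $B'$, proving $B'$ standard smooth over $D$ and hence over $A$. The rule $Y_k\mapsto \psi_k+d^{e+1}\sigma_k(T)$ then defines a $B$-algebra structure $B\to B'$, because $h_i=0$ in $B'$ forces $f_i=0$ once the torsion hypothesis on $d$ is invoked.

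Finally, to factor $v$ through $B'$ I would Hensel-lift the mod-$d^{2e+1}$ agreement $\omega(\psi_k)\equiv v(Y_k)$ against the unit-Jacobian system $(h_i)$, using Henselianity of $A'$; this yields values $t_k\in A'$ for the $T_k$'s realizing $v$ as the composite $B\to B'\to A'$. The hard part will be the middle step: a naive substitution $\sigma_k=T_k$ only produces a Jacobian minor of the form $M_j(\psi)+d^{e+1}(\cdots)$, which is generally not a unit in $B'$. The hypotheses $N_j\in(f):I$ and $\sum_jN_jM_j\equiv d\pmod I$ have to be deployed together so that contributions from several minors combine into a single Jacobian minor of $(h_i)$ that is genuinely a unit in $B'$ --- this is the combinatorial heart of the construction and where the algebraic meaning of $P$ is fully used. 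The stabilization $(0:_Ad^e)=(0:_Ad^{e+1})$ provides the technical input that legitimizes division by $d^{e+1}$ throughout.
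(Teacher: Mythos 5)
Your overall strategy---lift the approximate solution $y'\in D^n$ from hypothesis (2), substitute $Y = y' + d^{\ast}\cdot(\text{adjugate-built linear forms in new variables})$, divide the transformed equations by a power of $d$, and present $B'$ as the resulting smooth $D$-algebra---is indeed the paper's strategy. But the two steps you defer with ``a Taylor expansion should show'' and ``a direct Jacobian computation should exhibit'' are exactly where the proof lives, and as written your version of them fails. The paper substitutes $Y = y' + s^{-1}d^{e}\sum_j G_j(y')T_j$, where $P(y')=ds$ with $s\equiv 1$ modulo $d$, and uses, for each minor $M_j$, its own tail variables $T_{j,r+1},\ldots,T_{j,n}$ while sharing $T_1,\ldots,T_r$; then the identity $(\partial f/\partial Y)G_j=(M_jN_j\,\mathrm{Id}_r\mid 0)$ makes the linear part of $s^pf$ equal to $d^{e+1}$ times a unit multiple of $T_i$ in the first $r$ variables, so after dividing by $d^{e+1}$ the relevant $r\times r$ minor $s'$ of $(\partial g/\partial T)$ lies in $s^{rp}+(T)\subset 1+(d,T)$ and becomes invertible after localizing (and maps to a unit of the local ring $A'$). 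With your substitution $Y\mapsto\psi+d^{e+1}\sigma(T)$ and a single set of $n$ variables, the linear term in $T_i$ is $d^{e+1}P(\psi)T_i=d^{e+2}sT_i$, so after division by $d^{e+1}$ the Jacobian entry is $ds\,T_i$, which is not a unit; no choice of minor rescues this, so your $B'=D[T]/(h_1,\ldots,h_r)$ is not smooth as constructed. Note also that the paper's algebra is a localization $E_{ss's''}$; smoothness and the invertibility of $s,s',s''$ hold only there.

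Second, neither the $B$-algebra structure nor the factorization of $v$ is established. Killing $h_1,\ldots,h_r$ only forces the images of $f_1,\ldots,f_r$ to vanish, but $B=A[Y]/I$ needs all of $I$ to die in $B'$, and $f$ is merely a subsystem of $I$. This is where $N_j\in((f):I)$ is really used: from $PI\subset(f)$ one writes $P(y'+s^{-1}d^e\sum_jG_j(y')T_j)=ds''$ with $s''\in 1+(T)$, deduces $s''I\subset((h,g):d)$, and then flatness of $V=(D[Y,T]/(h,g))_{ss'}$ over $A$ together with $(0:_Ad^e)=(0:_Ad^{e+1})$ gives $s''IV\subset(0:_Vd)\cap d^eV=0$; so $I$ vanishes only in the further localization at $s''$. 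Your one-line claim that ``the torsion hypothesis forces $f_i=0$'' does not cover the rest of $I$, and your $B'$ carries no such localization. Finally, the paper does not obtain the factorization by Hensel lifting: it exhibits explicit values $t_j:=H_j(y')\nu\in d^eA'^n$ (where $y-\tilde y=d^{e+1}\nu$) so that the composite map sends $Y$ exactly to $y=v(Y)$, and then verifies $g(t)\in(0:_{A'}d^{e+1})\cap d^eA'=0$ using flatness of $u$. Hensel lifting against the system $(h_i)$ only produces some $A'$-point of $B'$; it does not by itself make the composite $B\to B'\to A'$ equal to $v$ exactly rather than merely modulo $d^{2e+1}$. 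These omissions are the substance of the proof, so the proposal has genuine gaps.
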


 \begin{proof}
 Let $\delta:B\otimes_AD\cong D[Y]/ID[Y]\to A'$ be the $A$-morphism given by $b\otimes \lambda\to v(b)\omega(\lambda)$.
First we  show that
 $\delta$ factors through a special  $B\otimes_AD$-algebra $E$ of finite type.

Let the map $\bar B\to \bar D$ is given by $Y\to y'+d^{2e+1}D$. Thus $I(y')\equiv 0$ modulo $d^{2e+1}D$. Since $\bar v$ factors through $\bar \omega$ we see that $\bar \omega(y'+d^{2e+1}D)=\bar y$. Set $\tilde y=\omega(y')$. We get
  $y-\tilde y=v(Y)-\tilde y\in d^{2e+1}A'^n$, let us say $y-\tilde y=d^{e+1}\nu$ for $\nu\in d^eA'^n$.

We have $M_j=\det H_j$, where $H_j$ is the matrix $(\partial f_i/\partial Y_{j'})_{i\in [r],{j'}\in [n]}$ completed with some $(n-r)$ rows from $0,\ 1$. Since $d\equiv P$ modulo $I$ we get $P(y')\equiv d$ modulo $d^{2e+1}$ in $D$ because $I(y')\equiv 0$ modulo $d^{2e+1}D$. Thus $P(y')=ds$ for some $s\in D$ with $s\equiv 1$ modulo $d$.
 Let $G'_j$ be the adjoint matrix of $H_j$ and $G_j=N_jG'_j$. We have
$G_jH_j=H_jG_j=M_jN_j\mbox{Id}_n$
and so
$$ds\mbox{Id}_n=P(y')\mbox{Id}_n=\sum_{j=1}^{q}G_j(y')H_j(y').$$

But  $H_j$ is the matrix $(\partial f_i/\partial Y_{j'})_{i\in [r],{j'}\in [n]}$ completed with some $(n-r)$ rows from $0,\ 1$. Especially we obtain
 \begin{equation}\label{identity1}(\partial f/\partial Y){G_j}=(M_jN_j\mbox{Id}_r|0).\end{equation}

 Then ${t_j}:=H_j(y')\nu\in d^eA'^n$
satisfies
$${G}_j(y'){t_j}=M_j(y')N_j(y')\nu=d{s}\nu$$
 and so
 $${s}(y-\tilde y)=d^e\sum_{j=1}^{q}\omega({G}_j(y')){t_j}.$$
 Let
 \begin{equation}\label{def of h1}{h}={s}(Y-y')-d^e\sum_{j=1}^{q}{G}_j(y'){T_j},\end{equation}
 where  ${T_j}=({T}_1,\ldots,{T}_r, T_{j,r+1}, \ldots , T_{j,n})$ are new variables. The kernel of the map
${\phi}:D[Y,{T}]\to A'$ given by $Y\to y$, ${T_j}\to {t_j}$ contains ${h}$. Since
$${s}(Y-y')\equiv d^e\sum_{j=1}^{q}{G}_j(y')T_j\ \mbox{modulo}\ {h}$$
and
$$f(Y)-f(y')\equiv \sum_{j'}(\partial f/\partial Y_{j'})((y') (Y_{j'}-y'_{j'})$$
modulo higher order terms in $Y_{j'}-y'_{j'}$, by Taylor's formula we see that for $p=\max_i \deg f_i$ we have
\begin{equation}\label{def of Q1}{s}^pf(Y)-{s}^pf(y')\equiv  \sum_{j'}{s}^{p-1}d^e(\partial f/\partial Y_{j'})(y')\sum_{j=1}^{q} {G}_{jj'}(y'){T}_{jj'}+d^{2e}{Q}\end{equation}
modulo $h$ where ${Q}\in {T}^2 D[{T}]^r$.   We have $f(y')=d^{e+1}{b}$ for some ${b}\in d^eD^r$. Then
\begin{equation}\label{def of g1}{g}_i={s}^p{b}_i+{s}^p{T}_i+d^{e-1}{Q}_i, \qquad i\in [r] \end{equation}  is in the kernel of $\phi$. Indeed,  we have ${s}^pf_i=d^{e+1}{g}_i\ \mbox{modulo}\ {h}$ because of (\ref{def of Q1}) and $P(y')=d{s}$. Thus
$d^{e+1}\phi({g})=d^{e+1}{g}(t)\in ({h}(y,{t}),f(y))=(0)$ and ${g}({t})\in d^eA'^r$ and so $g(t)\in (0:_{A'}d^{e+1})\cap d^eA'=0$ because $(0:_{A'}d^e)=(0:_{A'}d^{e+1})$, the map $u$ being flat. Set ${E}=D[Y,{T}]/(I,{g},{h})$ and let  ${\psi}:{E}\to A'$ be the map induced by $\phi$. Clearly, $v$ factors through $\psi$ because $v$ is the composed map $B\to B\otimes_AD\cong D[Y]/I\to {E}\xrightarrow{{\psi}} A'$.

Now we show that
there exist ${s}',{s}''\in{E}$ such that ${E}_{{s}{s}'{s}''}$ is standard smooth over $A$ and $\psi$ factors through ${E}_{{s}{s}'{s}''}$.

Note that the $r\times r$-minor  ${s}'$ of $(\partial {g}/ \partial {T})$ given by the first  $r$-variables ${T}$ is from ${s}^{rp}+({T})\subset 1+(d,{T})$ because ${Q}\in ({T})^2$. Then ${V}=(D[Y,{T}]/({h},{g}))_{{s}{s}'}$ is smooth over $D$. We claim that $I\subset ({h},{g})D[Y,{T}]_{{s}{s}'{s}''}$ for some other ${s}''\in 1+(d,{T})D[Y,{T}]$. Indeed, we have $PI\subset ({h},{g})D[Y,{T}]_{s}$ and so $P(y'+{s}^{-1}d^e\sum_{j=1}^qG_j(y')T_j)I\subset ({h},{g})D[Y,{T}]_{s}$. Since  $P(y'+{s}^{-1}d^e\sum_{j=1}^qG_j(y')T_j)\in P(y')+d^e({T})D[Y,T]_s$ we get \\ $P(y'+{s}^{-1}d^e\sum_{j=1}^qG_j(y')T_j)=d{s}''$ for some ${s}''\in 1+({T})D[Y,{T}]_s$. It follows that ${s}''I\subset (({ h},{g}):d)D[Y,{T}]_{{s}{s}'}$. Thus ${s}''IV\subset (0:_{V}d)\cap d^eV=0$ because $(0:_{V}d)\cap d^eV=0$, and $V$ is flat over $D$ and so over $A$.  This shows our claim.  It follows that
   $I\subset ({h},{g})D[Y,{T}]_{{s}{s}'{s}''}$. Thus ${E}_{{s}{s}'{s}''}\cong {V}_{{s}''} $ is a $B$-algebra which is also standard smooth over $D$ and $A$.

 As $\omega({s})\equiv 1$ modulo $d$ and ${\psi}({s}'),{\psi}({s}'')\equiv 1$ modulo $(d,{t})$, $d,{t}\in \mm A'$ we see that $\omega({s}),{\psi}({s}'), {\psi}({s}'')$ are invertible because  $A'$ is local. Thus ${\psi}$ (and so $v$) factors through the standard smooth $A$-algebra $B'={E}_{{s}{s}'{s}''}$.
 \end{proof}

{\bf Proof of Theorem \ref{m}}

 We choose   $\gamma_1, \gamma_2, \ldots , \gamma_m \in v(H_{B/A})A'\cap A$ such that $\gamma_k$ for $k \in [m]$ is a system of parameters  in $A$, and $\gamma_k=\sum_{i=1}^qv(b_i)z_i^{(k)}$, where $z_i^{(k)}\in A'$, $b_i\in H_{B/A}$. Set $B_0=B[Z^{(1)}, \ldots , Z^{(m)}]/(f^{(1)}, \ldots , f^{(m)}) $, where $ f^{(k)}=-\gamma_k +\sum_{i=1}^qb_iZ_i^{(k)}\in B[Z^{(k)}]$, $Z^{(k)}=(Z_1^{(k)},\ldots,Z_q^{(k)})$,
and let $v_0:B_0\to A'$ be the map of $B$-algebras given by $Z^{(k)}\to z^{(k)}$.
 Changing $B$ by $B_0$ we may suppose that $\gamma_k\in H_{B/A}$.

As in \cite{PP} we need the following lemma.

 \begin{Lemma}\label{lemma}
 \begin{enumerate}
   \item (\cite[Lemma 3.4]{P0}) \label{e1} Let $B_1$ be the symmetric algebra $S_B(I/I^2)$ of $I/I^2$ over\footnote{Let $M$ b e a finitely represented $B$-module and $B^m\xrightarrow{(a_{ij})} B^n\to M\to 0$ a presentation then $S_B(M)=B[T_1, \ldots, T_n]/J$ with $J=(\{\sum\limits^n_{i=1} a_{ij} T_i\}_ {j=1, \ldots, m})$.} $B$. Then $H_{B/A}B_1\subset H_{B_1/A}$ and  $(\Omega_{B_1/A})_{\gamma}$ is free over $(B_1)_{\gamma}$ for any $\gamma\in H_{B/A}$.
   \item  (\cite[Proposition 4.6]{S}) \label{e2}  Suppose that  $(\Omega_{B/A})_{\gamma}$ is free over $B_{\gamma}$. Let $I'=(I,Y')\subset A[Y,Y']$, $Y'=(Y'_1,\ldots,Y'_n)$. Then $(I'/I'^2)_{\gamma}$ is free over $B_{\gamma}$.
   \item (\cite[Corollary 5.10]{P2}) \label{e3} Suppose that $(I/I^2)_{\gamma}$ is free over  $B_{\gamma}$. Then a power of $\gamma$ is in  $ ((g):I)\Delta _g$ for some $g=(g_1,\ldots g_r)$, $r\leq n$ in $I$.
 \end{enumerate}
\end{Lemma}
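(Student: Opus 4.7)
The lemma bundles three cited facts needed in sequence for Theorem \ref{m}; I would prove them in the order (iii), (ii), (i), since (iii) extracts Jacobian/radical information from freeness of $I/I^2$, (ii) upgrades free-$\Omega$ to free conormal after adding dummy variables, and (i) arranges the free-$\Omega$ situation via the symmetric algebra construction.

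For (iii), lift a $B_\gamma$-basis of $(I/I^2)_\gamma$ to $g_1,\dots,g_r\in I$ by clearing $\gamma$-denominators. At each prime $P$ of $A[Y]$ containing $I$ but not $\gamma$, the localization $I_P/I_P^2$ is free of rank $r$ on $[g_1],\dots,[g_r]$, so Nakayama in $A[Y]_P$ forces $I_P=(g_1,\dots,g_r)A[Y]_P$. By the classical fact that a finitely generated ideal minimally generated by $r$ elements with free rank-$r$ conormal module is generated by a regular sequence, $(g_1,\dots,g_r)$ is a local complete intersection at $P$. This yields two containments at such $P$: $((g):I)\not\subset P$ (pick a common multiplier clearing each $f_i\in I$ into $(g)$, which exists by $I_P=(g)A[Y]_P$), and $\Delta_g\not\subset P$ (the Jacobian $(\partial g_i/\partial Y_j)$ has rank $r$ over $B_P$ by LCI). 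Hence any prime of $A[Y]$ containing $((g):I)\Delta_g+I$ must contain $\gamma$, giving $\gamma^N\in ((g):I)\Delta_g+I$ after absorbing cross terms via $((g):I)\cdot I\subset (g)\subset I$; this says $\gamma\in H_{B/A}$.

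For (ii), separating $Y'$-degrees in $I'^2=I^2+IY'R+(Y')^2R$ produces a $B$-module decomposition $I'/I'^2\cong I/I^2 \oplus B^n$, the second summand being free on $[Y'_1],\dots,[Y'_n]$. Freeness of $(\Omega_{B/A})_\gamma$ of rank $d$ splits the conormal sequence at $\gamma$ (the left-exactness here is where smoothness/LCI of $B_\gamma$ enters, which is available in the iterative setup where $B$ has been arranged by (i)), giving $(I/I^2)_\gamma\oplus B_\gamma^d\cong B_\gamma^n$; then the cancellation
\[
(I'/I'^2)_\gamma \cong (I/I^2)_\gamma \oplus B_\gamma^n
\cong (I/I^2)_\gamma \oplus B_\gamma^d \oplus B_\gamma^{n-d}
\cong B_\gamma^n \oplus B_\gamma^{n-d}
= B_\gamma^{2n-d}
\]
produces freeness of rank $2n-d$. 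Doubling the variable count is exactly what is needed: the inequality $n\geq d$ absorbs the stably-free defect into the new free $B_\gamma^n$ summand.

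For (i), the key input is the standard symmetric-algebra identity $\Omega_{B_1/B}\cong (I/I^2)\otimes_B B_1$. Since $\gamma\in H_{B/A}$, $B_\gamma$ is smooth over $A$, so the conormal sequence splits and $(I/I^2)_\gamma$ is a projective $B_\gamma$-module; the symmetric algebra of a projective module is Zariski-locally polynomial, so $(B_1)_\gamma$ is smooth over $B_\gamma$. The fundamental sequence of K\"ahler differentials is then split short exact at $\gamma$, and
\[
\Omega_{B_1/A,\gamma}
\cong (B_1)_\gamma \otimes_{B_\gamma}\bigl(\Omega_{B/A,\gamma}\oplus (I/I^2)_\gamma\bigr)
\cong (B_1)_\gamma \otimes_{B_\gamma} B_\gamma^n
= (B_1)_\gamma^n
\]
is free of rank $n$. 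The inclusion $H_{B/A}B_1\subset H_{B_1/A}$ follows because $(B_1)_\gamma$ is smooth over $A$ (composition of two smooth morphisms), so $\Spec((B_1)_\gamma)$ misses the non-smooth locus $V(H_{B_1/A})$, forcing a power of $\gamma$ into $H_{B_1/A}$ and hence $\gamma$ itself by the radical. The main subtlety throughout is ensuring the conormal sequence is left-exact at $\gamma$: (i) secures this via smoothness of $B_\gamma/A$, (ii) inherits it from the iterative setup, and (iii) obtains it as a Nakayama/LCI consequence of the freeness hypothesis.
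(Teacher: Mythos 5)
The paper offers no proof of this lemma: all three parts are quoted verbatim from the literature (\cite[Lemma 3.4]{P0}, \cite[Proposition 4.6]{S}, \cite[Corollary 5.10]{P2}), so any worked-out argument is necessarily ``different'' from the paper's. Your treatments of (1) and (2) are essentially the standard ones and are sound in the setting where the lemma is applied: the identity $\Omega_{S_B(M)/B}\cong M\otimes_BS_B(M)$, smoothness of the symmetric algebra of a projective module, and the split conormal sequence give (1); and for (2) you correctly flag that the left-exactness of $I/I^2\to B^n$ at $\gamma$ is an extra input, supplied by the smoothness of $B_\gamma$ over $A$ arranged in step (1) --- without it the cancellation $(I/I^2)_\gamma\oplus B_\gamma^n\cong B_\gamma^{2n-d}$ breaks down, since only the image of $(I/I^2)_\gamma$ in $B_\gamma^n$, not $(I/I^2)_\gamma$ itself, is then a complement of $\Omega_\gamma$.

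Part (3), however, has a genuine gap. You deduce $\Delta_g\not\subset P$ from the claim that ``the Jacobian $(\partial g_i/\partial Y_j)$ has rank $r$ over $B_P$ by LCI.'' A local complete intersection does not have a full-rank Jacobian; that is exactly the gap between a complete intersection and a smooth point. Concretely, take $A=k$ a field, $B=k[Y]/(Y^2)$, $I=(Y^2)$: here $I/I^2$ is free of rank one over $B$ (the annihilator of the class of $Y^2$ in $B$ is $(Y^2)=0$), so the hypothesis of (3) holds with $\gamma=1$, yet every $g\in I$ satisfies $\partial g/\partial Y\in(Y)$, hence $\Delta_g\subset(Y)$ and no power of $\gamma=1$ lies in $((g):I)\Delta_g$. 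This shows both that your derivation of the rank condition is invalid and that the statement is false without the hypothesis $\gamma\in H_{B/A}$, which is implicit in \cite[Corollary 5.10]{P2} and in the paper's use of the lemma with $\gamma=\gamma_k\in H_{B/A}$. The correct argument must exploit that hypothesis: at a prime $P$ of $B$ with $\gamma\notin P$ there is, by definition of $H_{B/A}$, some system $f$ with $((f):I)\Delta_f\not\subset P$; since $f$ and the lifted basis $g$ both generate $I_P$ and $(I/I^2)_P$ is free of rank $r$, an invertible change-of-basis matrix over $B_P$ transfers the nonvanishing of an $r\times r$ Jacobian minor from $f$ to $g$. Your appeal to the Vasconcelos-type statement (free conormal module of rank equal to the minimal number of generators forces a regular sequence) is also unjustified here, as $A[Y]_P$ need not be regular and $I_P$ need not have finite projective dimension --- and even granting the LCI conclusion, it would not yield the Jacobian rank you need.
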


Using $(1)$ of Lemma \ref{lemma} we reduce our proof to the case when $\Omega_{B_{\gamma_k}/A}$ for all $k \in[m]$ are free over $ B_{\gamma_k}$ respectively.

Let $B_1$ be given by Lemma \ref{e1}. The inclusion $B\subset B_1$ has a retraction $w$ which maps $I/I^2$ to zero. For the reduction we change $B,v$ by $B_1,vw$.

Using $(2)$ of Lemma \ref{lemma} we may reduce to the case when  $(I/I^2)_{\gamma_k}$ is free over $ B_{\gamma_k}$ for all $k \in [m]$.

Since $\Omega_{B_{\gamma_k}/A}$ is free over $ B_{\gamma_k}$ we see using Lemma \ref{e2} that changing $I$ with $(I,Y')\subset A[Y,Y']$ we may suppose that $(I/I^2)_{\gamma_k}$ is free over $ B_{\gamma_k}$.

Now using Using $(3)$ of Lemma \ref{lemma} we will reduce further to the case when a power  of $\gamma_k$ is in  $ ((f^{(k)}):I)\Delta _{f^{(k)}}$ for some $f^{(k)}=(f_1^{(k)},\ldots f_{r_k}^{(k)})$, $r_k\leq n$ from $I$.

We reduced to the case when $(I/I^2)_{\gamma_k}$ is free over $ B_{\gamma_k}$. Then it is enough to  use Lemma \ref{e3}.

Replacing $B_1$ by $B$ we may assume that a power $d_k$ of $\gamma_k$ for all $k \in [m]$ has the form $d_k\equiv P_k= \sum_{i=1}^{q_k}M_i^{(k)}L_i^{(k)}\ \mbox{modulo}\ I$,
for some $r_k\times r_k$ minors $M_i^{(k)}$
of $(\partial f^{(k)}/\partial Y) $ and $L_i^{(k)}\in ((f^{(k)}):I)$.

The Jacobian matrix $(\partial f^{(k)}/\partial Y)$ can be completed with $(n-r_k)$ rows from $A^n$ obtaining a square $n$ matrix $H_i^{(k)}$ such that $\det H_i^{(k)}=M_i^{(k)}$.

This is easy using just the integers $0,1$.
Set $d=d_m$, $f=f^{(m)}$, $r=r_m$, $q=q_m$, $M_i=M_i^{(m)}$, $N_i=N_i^{(m)}$,  $\bar A=A/d^{2e+1}$, ${\bar B}={\bar A}\otimes_AB$, ${\bar A}'=A'/(d^{2e+1}A')$, ${\bar v}={\bar A}\otimes_Av$. Then we have $d\equiv \sum_jM_jN_j$ modulo $I$.
Now we will use the induction on $m$.
\vskip 0.3 cm
\textbf{Case I:}  $m=0$
\vskip 0.3 cm
If $m=0$ then $A$ and $A'$ are Artinian local rings and $u :
A \rightarrow A'$ is a regular morphism. Then we are done by Corollary 3.3  \cite{P0}.
\vskip 0.3 cm
\textbf{Case II:}  $m>0$
\vskip 0.3 cm
Suppose by the induction hypothesis that  we have  a standard smooth $\bar{A}$-algebra $\bar{D}\cong (\bar A[Z]/(\bar g))_{\bar h \bar M }$, for $ Z=(Z_1,\ldots,Z_p), \bar  g=(\bar g_1, \ldots ,\bar g_q)$ with $q\leq p$, $\bar h\in \bar A[Z]$ and $\bar M$  a $q \times q$-minor of $(\frac{\partial \bar g}{\partial Z})$, such that the map $\bar{v}:\bar{B} \rightarrow \bar{A'}$ factors through  $\bar{D}$, let us say $\bar v$ is the composite map  $\bar{B} \rightarrow \bar {D} \xrightarrow{\bar \omega} \bar{A'}$.

Now let $g \in A[Z]^q$ be a lifting of $\bar g$ and $M$ the $q \times q$-minor of $(\frac{\partial g}{\partial Z})$ corresponding $\bar M$. Take $h \in A[Z]$ such that $h$ lifts $\bar h$.
 Then $D\cong (A[Z]/(g))_{h M}$ is a  standard smooth $A$-algebra and by the \emph{Implicit Function Theorem} the map ${\bar \omega}$ can be lifted to $\omega:D \to A'$ since $A'$ is Henselian. It follows that
$\Im v\subset \Im \omega + d^{2e+1}A'$. Applying Proposition \ref{proposition} we get a $B$-algebra $C$ smooth over $A$ such that $v$ factors through $C$, $B \rightarrow C \rightarrow A' $.\\

\section{A uniform  N\'eron Desingularization}

 Let $u:A\to A'$ be a regular morphism of Cohen-Macaulay local rings of dimension $m$. Suppose that  the maximal ideal $\mm$ of $A$ generates the maximal ideal of $A'$, $A'$ is Henselian and $A$, $A'$ have the same completions.

  Let $B=A[Y]/I$, $Y=(Y_1,\ldots,Y_n)$, and for $i \in [m]$ let $f^{(i)}=(f_1^{(i)},\ldots,f_{r_i}^{(i)})$, $r_i\leq n$ be a system of polynomials from $I$. Let $M_i$ be an $r_i \times r_i$-minor of the Jacobian matrix $(\partial f^{(i)}/ \partial Y)$ and $N_i \in ((f^{(i)}):I)$, $P_i=N_i M_i$.  Let  $v:B \to A'/\mm^{3k+c}A'$ be an $A$-morphism for some $k,c \in \mathbb{N}$. Suppose that $v(N_1M_1, \ldots , N_mM_m)A'/\mm^{3k+c}A' \supset \mm^kA'/\mm^{3k+c}A'$. Let $y' \in  A^n$ be a lifting of $v(Y)$ to $A$ and let $d_i=P_i(y')$. Then $(d_1, \ldots , d_m)A' /\mm^{3k+c}A'\supset \mm^kA'/\mm^{3k+c}A'$. Note that $\mm^k \subset (d_1, \ldots , d_m)A + \mm^{3k+c} \subset (d_1, \ldots , d_m)A + \mm^{3(3k+c)+c} \subset \ldots$. Thus $\mm^k \subset (d_1, \ldots , d_m)A $ and it follows that $(d_1, \ldots , d_m)A' \supset \mm^kA'$ . Since $A$ is Cohen-Macaulay we get $d=\{d_1, \ldots , d_m\}$ regular sequence in $A$. Note that $(d_1, \ldots , d_m)$ is the ideal corresponding to $v(P_1, \ldots , P_m)A'$ by the isomorphism $A/\mm^{3k+c} \cong A' /\mm^{3k+c}A'$.

  \begin{Theorem}\label{unith}
There exists a $B$-algebra $C$ which is standard smooth over $A$ with the following properties.
\begin{enumerate}
  \item
  Every $A$-morphism $v':B\to A'$ with $v'\equiv v \ \mbox{modulo}\ (d_1^3, \ldots , d_m^3) A'$ factors through $C$.
  \item
 Every $A$-morphism $v':B\to A'$ with $v'\equiv v \ \mbox{modulo}\ \mm^{3k}A'$ factors through $C$.
  \item
    There exists an $A$-morphism $w:C\to A'$ which makes the following diagram commutative
 $$
  \begin{xy}\xymatrix{B \ar[d] \ar[r] & C  \ar[r]^{w} & A'\ar[d]\\
  A/\mm^{3k+c} \ar[r] &  A/\mm^c \ar[r]   & A'/\mm^cA' }
  \end{xy}
  $$
\end{enumerate}
  \end{Theorem}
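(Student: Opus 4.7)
The plan is to argue by induction on $m$, paralleling the inductive proof of Theorem \ref{m} while propagating the uniform conclusions (1)--(3) through each step. In the Cohen-Macaulay setting, $d_1, \ldots, d_m$ is a regular sequence, so each $d_i$ is a nonzerodivisor in $A$ and (by flatness of $u$) in $A'$; this permits taking $e = 1$ in Proposition \ref{proposition}, which aligns the exponent $d^{2e+1} = d^3$ with the cubes in condition (1). The base case $m = 0$ is Artinian and reduces directly to the uniform Artinian desingularization of Corollary 3.3 of \cite{P0}.

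For the inductive step $m > 0$, set $d = d_m$ and pass to $\bar A = A/(d)$, $\bar A' = A'/(d)A'$, a regular morphism of Cohen-Macaulay local rings of dimension $m-1$ with $\bar A'$ Henselian, the same completions, and regular sequence $\bar d_1, \ldots, \bar d_{m-1}$. The reduced data $(\bar B, \bar v, \bar f^{(i)}, \bar M_i, \bar N_i)_{i \in [m-1]}$ satisfies the hypotheses of Theorem \ref{unith} in dimension $m-1$, so the inductive hypothesis produces $\bar C$ standard smooth over $\bar A$ together with a map $\bar w \: \bar C \to \bar A'$ satisfying (1)--(3). Lift a presentation $\bar C = (\bar A[Z]/(\bar g))_{\bar h \bar M}$ to $D = (A[Z]/(g))_{hM}$ standard smooth over $A$, and lift $\bar w$ to $\omega \: D \to A'$ via the Implicit Function Theorem (valid as $A'$ is Henselian). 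The hypotheses of Proposition \ref{proposition} are then fulfilled for $v$, and that proposition produces $C = E_{ss's''}$ standard smooth over $A$ and a factorization $\psi \: C \to A'$ of $v$, which serves as the required $w$ in (3).

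The uniformity statements are then verified directly. For (1), $v' \equiv v \mod (d_1^3, \ldots, d_m^3) A'$ yields $v'(Y) - \tilde y \in d^3 A'^n$ with $\tilde y = \omega(y')$; writing $v'(Y) - \tilde y = d^2 \nu^*$ with $\nu^* \in dA'^n$ (permitted as $d$ is a nonzerodivisor) and $t_j^* = H_j(y') \nu^*$, the map $\psi' \: C \to A'$ given by $Y \mapsto v'(Y)$, $T_j \mapsto t_j^*$ is well-defined by the Taylor-expansion identities used in the proof of Proposition \ref{proposition}, and $s, s', s''$ remain units since they lie in $1 + \mm A'$. For (2), $v' \equiv v \mod \mm^{3k} A'$ reduces mod $d$ to $\bar v' \equiv \bar v \mod \bar \mm^{3k} \bar A'$, and the inductive hypothesis (2) factors $\bar v'$ through $\bar C$; using the commutative diagram from (3) in dimension $m - 1$ one can select a factorization whose lift $z' \in D^n$ is close enough to $y'$ that the argument of (1) applies.

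The main obstacle I anticipate is the uniformity in (2), since $\mm^{3k}$ is not contained in $d^3 A'$ in general. The remedy is to exploit conclusion (3) of the inductive step — the compatibility of $\bar w$ with $\bar v$ modulo $\bar \mm^c$ — in order to align, for each admissible $v'$, the induction-furnished factorization of $\bar v'$ through $\bar C$ with the one used for $\bar v$, appealing to Henselian lifting of maps from $\bar D$-algebras to $D$-algebras and to the containment $\mm^k \subset (d_1, \ldots, d_m) A$ to bound the discrepancy $\omega(z') - \omega(y')$ inside $d^3 A'^n$.
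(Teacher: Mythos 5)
Your proposal has the right overall shape (induction plus Proposition \ref{proposition} with $e=1$ and Henselian lifting of a smooth algebra from the quotient), but the way you set up the reduction breaks the hypotheses of Proposition \ref{proposition}. You pass to $\bar A=A/(d)$ with $d=d_m$, so the factorization furnished by the inductive hypothesis is only modulo $d$; hypothesis (2) of Proposition \ref{proposition} with $e=1$ requires $\Im v\subset \Im\omega+d^{3}A'$ and a factorization of $v$ modulo $d^{3}$, and the exponent $2e+1$ cannot be lowered to $1$ (taking $e=0$ is impossible: the construction divides $s^pf$ by $d^{e+1}$ and uses $d^{e-1}Q_i$). The paper instead reduces modulo the cubes $(d_1^3,\ldots,d_{m-1}^3)$, keeping $d_m$ (with its cube) for the Proposition. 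Relatedly, your verification of (1) asserts that $v'\equiv v$ modulo $(d_1^3,\ldots,d_m^3)A'$ yields $v'(Y)-\tilde y\in d_m^3A'^n$ for the single fixed $\tilde y=\omega(y')$; this is false for $m\geq 2$, because the discrepancy only lies in the ideal $(d_1^3,\ldots,d_m^3)A'$ and its components along $d_1^3,\ldots,d_{m-1}^3$ cannot be pushed into $d_m^3A'$. Handling those components is exactly what the uniform inductive hypothesis is for: for each $v'$ one refactors $\bar{v'}$ (taken modulo the first $m-1$ cubes) through the same $\bar D$, lifts to a possibly new $\omega'$ by the Implicit Function Theorem, and only then gets $\Im v'\subset\Im\omega'+d_m^3A'$; the algebra $C=E_{ss's''}$ does not change since its construction involves only $D$, $y'$, $d_m$, $f$, $G_j$. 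Your sketch for (2) (the ``alignment'' of factorizations via conclusion (3) in lower dimension) is this same missing step and is not actually carried out.

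Part (3) is also not obtained as you claim: in the uniform setting $v$ is only a morphism $B\to A'/\mm^{3k+c}A'$, so Proposition \ref{proposition} cannot be applied ``to $v$'' and there is no factorization $\psi:C\to A'$ of $v$ to serve as $w$. The paper constructs $w$ separately: define $\hat w:C\to A'/\mm^cA'$ by $(Y,T)\mapsto (y',0)$, note that the composite $B\to C\xrightarrow{\hat w}A'/\mm^cA'$ is lifted by $v$, and then lift $\hat w$ to $w:C\to A'$ using that $C$ is standard smooth and $A'$ is Henselian, which gives the commutative diagram. (Your choice of base case $m=0$ is a further, minor, deviation: the paper starts at $m=1$ with the uniform one-dimensional theorem of \cite{KPP}, the Artinian case being degenerate for the uniform statement since the sequence $d_1,\ldots,d_m$ is then empty.) To repair your argument, reduce modulo $(d_1^3,\ldots,d_{m-1}^3)$ as the paper does, apply the lower-dimensional uniform statement to each $\bar{v'}$ separately to produce $\omega'$, keep $d_m^3$ for Proposition \ref{proposition}, and prove (3) by the explicit lifting above rather than through a factorization of $v$.
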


\begin{proof}
Let $v':B\to A'$ be an $A$-morphism with $v'\equiv v \ \mbox{modulo}\ (d_1^3, \ldots , d_m^3) A'$. We apply induction on $m$.
\vskip 0.3 cm
\textbf{Case I:}  $m=1$
\vskip 0.3 cm
If $m=1$ then $A$ and $A'$ are Noetherian local rings of dimension $1$ and $u :
A \rightarrow A'$ is a regular morphism. Then we are done by Theorem 2  \cite{KPP}, with $e=1$.
\vskip 0.3 cm
\textbf{Case II:}  $m>1$
\vskip 0.3 cm
Now let $\bar A =A/(d_1^3, \ldots , d_{m-1}^3)$, and consider the map $\bar{v'}={\bar A} {\otimes}_{A} v': \bar{B}={\bar A} {\otimes}_{A}B \to \bar{A}'={\bar A} {\otimes}_{A}A'$. By the induction hypothesis there exists a standard smooth algebra $\bar{D}\cong (\bar A[Z]/(\bar g))_{\bar h \bar M }$, for $ Z=(Z_1,\ldots,Z_p), \bar  g=(\bar g_1, \ldots ,\bar g_q)$ with $q\leq p$, $\bar h\in \bar A[Z]$ and $\bar M$  a $q \times q$-minor of $(\frac{\partial \bar g}{\partial Z})$, such that the map $\bar{v'}$ factors through $\bar{D}$, say $\bar {v'}$ is the composite map $\bar B \to \bar D \xrightarrow{\bar{\omega '}} A'$.

Now let $g \in A[Z]^q$ be a lifting of $\bar g$ and $M$ the $q \times q$-minor of $(\frac{\partial g}{\partial Z})$ corresponding $\bar M$. Take $h \in A[Z]$ such that $h$ lifts $\bar h$.
 Then $D\cong (A[Z]/(g))_{hM}$ is a  standard smooth $A$-algebra and by the \emph{Implicit Function Theorem} the map ${\bar {\omega'}}$ can be lifted to $\omega':D \to A'$ since $A'$ is Henselian. It follows that
$\Im v'\subset \Im \omega' + d_m^3A'$. Applying Proposition \ref{proposition} (with $e=1$) we get a $B$-algebra $C$ standard smooth over $A$ such that $v'$ factors through $C$. This proves (1) which obviously implies (2).\\

 Now  for (3) take the map ${\hat w}:C\cong (D[Y,T]/(I,g,h))_{ss'}\to A'/\mm^cA'$ given by $(Y,T)\to (y',0)$. Then the composite map $B\to C\xrightarrow{\hat w} A'/\mm^c A'$
 is lifted by $v$. Since $C$ is standard smooth, we may lift $\hat w$ to an $A$-morphism $w:C\to A'$ by the Implicit Function Theorem. Clearly, $w$ makes the above diagram commutative.
\end{proof}

\begin{Example} (Rond) \label{e}{\em Let $k$ be a field, $A=k[[x]]$, $x=(x_1,x_2,x_3)$, $B=A[Y]/(f)$, $Y=(Y_1,\ldots,Y_4)$, $f=Y_1Y_2-Y_3Y_4$. Then $\Delta_f=H_{B/A}=(Y)$. Let $p\in {\bf N}$ and set $y'_1=x_1^p$, $y'_2=x_2^p$, $y'_3=x_1x_2-x_3^p$. Then there exists $y'_4\in A$ such that $f(y')\equiv 0$ modulo $(x)^{p^2}$. It follows that $d_1=x_1^p$, $d_2=x_2^p$ and $d_3=x_3^{p^2}$ belongs to $\Delta_f(y') $ because $x_1^px_2^p-x_3^{p^2}=y'_3(x_1^{p-1}x_2^{p-1}+x_1^{p-2}x_2^{p-2}x_3^p+\ldots x_3^{p^2} $). For $k=2p+p^2-2$ we have $(x)^k\subset (d_1,d_2,d_3)\subset H_{B/A}(y')$. If $f(y')\equiv 0$ modulo $(x)^{3k+p+1}$ then by Theorem \ref{unith} (3) we could get $y\in A^4$ such that $f(y)=0$ and $y\equiv y'$ modulo $(x)^{p+1}$. But this is not the case, since $f(y')\equiv 0$ modulo $(x)^{p^2}$  and we cannot apply the quoted theorem. Thus it is not a surprise that \cite[Remark 4.7]{R} says that there exist no $y\in A^4$ such that $f(y)=0$ and $y\equiv y'$ modulo $(x)^{p+1}$.}
\end{Example}

\begin{Corollary} (Elkik) Let $(A,\mm)$ be a Cohen-Macaulay Henselian local ring of dimension $m$ and $B=A[Y]/I$, $Y=(Y_1,\ldots,Y_n) $ an $A$-algbra of finite type. Then for every $k\in \bf N$ there exist two integers
$m_0,p\in \bf N$ such that if $y'\in A^n$ satisfies $m^k\subset H_{B/A}(y')$ and $I(y')\equiv 0$ modulo $\mm^m$ for some    $m>m_0$ then there exists $y\in A^n$ such that $I(y)=0$ and $y\equiv y'$ modulo $\mm^{m-p}$.
\end{Corollary}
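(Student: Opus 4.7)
\emph{Strategy.} I would apply Theorem~\ref{unith} to the identity map $u=\id_A\colon A\to A$, which trivially is a regular morphism of Cohen-Macaulay Henselian local rings of dimension $m$ with common completion. Given $y'\in A^n$ satisfying the hypotheses, the assignment $Y\mapsto y'$ defines $v\colon B\to A/\mm^{\ell}$ (writing $\ell$ for the exponent called ``$m$'' in the statement, to avoid conflict with the dimension $m$). The goal is to verify the hypotheses of Theorem~\ref{unith} for this $v$ with an integer $k_T$ bounded solely in terms of $k$, and then invoke part~(3) to produce an $A$-morphism $w\colon C\to A$; composing with $B\to C$ gives an $A$-algebra map $B\to A$ whose value on $Y$ is the desired $y\in A^n$ with $I(y)=0$ and $y\equiv y'$ modulo $\mm^{c}$.

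\emph{Preparation of the tuple $(P_1,\ldots,P_m)$.} Since $B$ is Noetherian, $\tilde H:=\sum_f((f):I)\Delta_f B$ is finitely generated and $H_{B/A}^{\,s}\subset\tilde H$ for some fixed $s$. Choose once and for all a system of parameters $\gamma_1,\ldots,\gamma_m\in\mm^{ks}$ of $A$, which exists because $\mm^{ks}$ is $\mm$-primary and $A$ is Cohen-Macaulay of dimension $m$. Then mimic the initial reduction in the proof of Theorem~\ref{m}: each $\gamma_i$ lies in $\mm^{ks}\subset H_{B/A}(y')A$, hence equals $\sum_j b_{ij}z_{ij}$ for fixed generators $b_{ij}\in H_{B/A}\subset B$ and coefficients $z_{ij}\in A$ depending on $y'$. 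Enlarge $B$ to $B_0=B[Z_{ij}]/(\gamma_i-\sum_j b_{ij}Z_{ij})$ and extend $v$ by $Z_{ij}\mapsto z_{ij}$: then $\gamma_i\in H_{B_0/A}$ by construction. Successive applications of Lemma~\ref{lemma}(1)--(3) replace $B_0$ by a further enlargement in which some fixed power $d_i$ of $\gamma_i$ is written $d_i=N_iM_i$ for a system $f^{(i)}$ in the new ideal, minor $M_i$, and $N_i\in((f^{(i)}):I_0)$. The whole chain depends only on $B$, $A$, $k$; so the tuple $(P_1,\ldots,P_m):=(d_1,\ldots,d_m)$ is fixed, and since $d_i$ is a fixed power of $\gamma_i\in\mm^{ks}$, the $\mm$-primary ideal $(d_1,\ldots,d_m)A$ contains $\mm^{k_T}$ for some $k_T$ depending only on $k$, $s$, $A$.

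\emph{Invoking Theorem~\ref{unith} and the main obstacle.} With $(P_i)$ prepared, set $p:=3k_T$, $m_0:=p$, and $c:=\ell-p$. The condition $v(P_1,\ldots,P_m)A/\mm^{3k_T+c}A\supset\mm^{k_T}A/\mm^{3k_T+c}A$ of Theorem~\ref{unith} is satisfied because $3k_T+c=\ell$ is no larger than the $\mm$-adic accuracy of $v$; part~(3) then produces $C$ standard smooth over $A$ and $w\colon C\to A$ whose composition with $B\to C$ agrees with $v$ modulo $\mm^c=\mm^{\ell-p}$, giving $y:=w(Y)\in A^n$ with $I(y)=0$ and $y\equiv y'$ modulo $\mm^{\ell-p}$. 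The main obstacle is the uniformity of the preparation step: the chain of enlargements $B\rightsquigarrow B_0\rightsquigarrow\cdots$ mirroring the proof of Theorem~\ref{m} must be performed without reference to $y'$ so that the resulting $k_T$ (and hence $p$ and $m_0$) depends only on $k$. Choosing the fixed SOP $\gamma_1,\ldots,\gamma_m$ inside $\mm^{ks}$ is what decouples the extraction from $y'$, while the dependence of the auxiliary coefficients $z_{ij}$ on $y'$ is harmless, since these only parameterise the extension of $v$ to $B_0$, not the structure of the algebra itself.
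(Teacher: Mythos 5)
Your proposal is correct and is essentially the paper's own proof: take $A'=A$, view $Y\mapsto y'$ as a morphism $v:B\to A/\mm^{\ell}$, and apply Theorem \ref{unith}(3) with $c=\ell-p$ to get $w:C\to A$ and $y=w(Y)$ with $I(y)=0$, $y\equiv y'$ modulo $\mm^{c}$. The only difference is that you spell out a uniform preparation step (fixed system of parameters in a power of $\mm$, the $B_0$-enlargement and Lemma \ref{lemma}(1)--(3)) to produce the data $(f^{(i)},N_i,M_i)$ independently of $y'$, ending with constants $m_0=p=3k_T$ instead of the paper's $3k$ -- a step the paper's proof leaves implicit by saying ``in the notation of Theorem \ref{unith}'' (and note that Lemma \ref{lemma}(3) gives $d_i$ as a sum of products $N_{ij}M_{ij}$ rather than a single $N_iM_i$, which is harmless since Proposition \ref{proposition}, hence Theorem \ref{unith}, accommodates such sums).
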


\begin{proof} Suppose that $A'=A$. In the notation of Theorem \ref{unith} given $k$ set $m_0=p=3k$ and  suppose that $y'\in A^n$ satisfies  $m^k\subset H_{B/A}(y')$ and $I(y')\equiv 0$ modulo $\mm^m$ for some    $m>m_0$. Let $v:B\to A/\mm^m$ be given by $Y\to y'$. Set $c=m-p$. By  Theorem \ref{unith} there exists a smooth
$A$-algebra $C$ and a map $w:C\to A$ which makes the above digram commutative. Let $y$ be the image of $Y$ by the composite map $B\to C\xrightarrow{w} A$. Then  $I(y)=0$ and $y\equiv y'$ modulo $\mm^c=\mm^{m-p}$.
\hfill\ \end{proof}

\begin{Corollary}\label{c1} With the assumptions and notation of the  Theorem \ref{unith}, let $\rho:B\to C$ be the structural algebra map. Then $\rho$ induces bijections $\rho^*$ given by $\rho^*(w)=w\circ \rho$, between
 \begin{enumerate}
   \item $\{w\in \Hom_A(C,A'):w\circ \rho\equiv v \ \mbox{modulo}\ (d_1^{3}, \ldots , d_m^3)A'\}$ and
 $\{v'\in \Hom_A(B,A'):v'\equiv v \ \mbox{modulo}\ (d_1^{3}, \ldots , d_m^3)A'\}$
   \item $\{w\in \Hom_A(C,A'):w\circ \rho\equiv v \ \mbox{modulo}\ \mm^{3k}A'\}$ and
 $\{v'\in \Hom_A(B,A'):v'\equiv v \ \mbox{modulo}\ \mm^{3k}A'\}$
 \end{enumerate}
 \end{Corollary}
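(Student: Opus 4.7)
The plan is to establish both surjectivity and injectivity of $\rho^*$ in parts (1) and (2), with surjectivity being a direct consequence of Theorem \ref{unith} and injectivity requiring a closer analysis of the explicit construction of $C$.

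For surjectivity, given $v'$ in the right-hand side of (1), Theorem \ref{unith}(1) provides $w \in \Hom_A(C, A')$ with $w \circ \rho = v'$, hence $\rho^*(w) = v'$. The argument for (2) is identical, using Theorem \ref{unith}(2). Well-definedness of $\rho^*$ into the target is automatic since the congruence condition on $w$ transfers to $w \circ \rho$.

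For injectivity, suppose $w_1, w_2 \in \Hom_A(C, A')$ satisfy $w_1 \circ \rho = w_2 \circ \rho = v'$. Recall from the proof of Proposition \ref{proposition} that $C = (D[Y,T]/(I,g,h))_{ss's''}$, where $D$ is a standard smooth $A$-algebra with a fixed lifting $\omega: D \to A'$, the tuple $y' \in D^n$ lifts $v(Y)$, and $h = s(Y - y') - d^e \sum G_j(y')T_j$ with $d = d_m$. From $w_i \circ \rho = v'$ we immediately get $w_i(Y) = v'(Y)$. To pin down $w_i|_D$, reduce modulo $(d_1^3, \ldots, d_{m-1}^3)A'$ to obtain maps $\bar D \to \bar A'$, which by the corresponding one-dimensional uniqueness from \cite{KPP} must coincide with $\bar \omega$; Hensel's lemma applied to the standard smooth $D$ over the Henselian ring $A'$ then lifts this uniquely to $w_i|_D = \omega$. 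Finally, the defining equation $h = 0$, together with the non-zero-divisor property of $d$ in $A'$ (by the Cohen-Macaulay hypothesis on $A$ and flatness of $A \to A'$), combined with the Taylor-type relations $g_i = 0$ and Henselianness of $A'$, forces each $w_i(T)$ to be uniquely determined. Hence $w_1 = w_2$, and (1) follows; (2) is handled analogously since the same construction applies once the congruence on $w \circ \rho$ is assumed.

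The main obstacle is pinning down $w_i|_D$: since $D$ is only smooth (not \'etale) over $A$, $A$-algebra maps $D \to A'$ are not a priori unique given merely their reduction modulo an ideal. The resolution combines the inductive one-dimensional uniqueness statement, which forces $\bar w_i|_{\bar D} = \bar \omega$, with the standard smooth structure on $D$ that makes the Hensel lift from $\bar\omega$ to $\omega$ unique.
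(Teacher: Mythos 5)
Your overall skeleton is the paper's: surjectivity comes straight from Theorem \ref{unith}(1),(2); for injectivity you fix $w_1\circ\rho=w_2\circ\rho$, note $w_1(Y)=w_2(Y)$, pin down the restrictions to $D$ by reducing modulo $(d_1^3,\ldots,d_{m-1}^3)A'$ (where the ring is one-dimensional, so the KPP uniqueness is exactly the induction hypothesis the paper invokes) and then lifting, and finally try to pin down the $T$-variables. Up to the harmless inaccuracy that the reductions $\bar w_i|_{\bar D}$ should be shown to agree \emph{with each other} (they factor $\bar v'$, not $\bar v$, since $v'\equiv v$ only modulo the full ideal including $d_m^3$, so they need not equal $\bar\omega$), this part matches the paper, including the appeal to uniqueness of the lift for the standard smooth $D$, which is the paper's own ``uniqueness in the Implicit Function Theorem'' step.

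The genuine gap is in your last step. You assert that ``$h=0$, regularity of $d$, the relations $g_i=0$ and Henselianness of $A'$'' force $w_i(T)$ to be uniquely determined, but none of these, as invoked, controls the free variables $T_{r+1},\ldots,T_n$: the minor $s'$ of $(\partial g/\partial T)$ involves only $T_1,\ldots,T_r$, so no Hensel/IFT argument determines the remaining $T$'s, and from $h=0$ alone (using $w_1(Y)=w_2(Y)$ and $w_1|_D=w_2|_D$, then cancelling $d_m$) you only obtain $G_m(y')\bigl(w_1(T)-w_2(T)\bigr)=0$, where $G_m(y')$ is \emph{not} invertible (its determinant is divisible by a power of $d_m$), so $w_1(T)=w_2(T)$ does not follow directly. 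The missing idea is the adjoint identity $H_m(y')G_m(y')=P_m(y')\,\mbox{Id}_n=d_m s\,\mbox{Id}_n$: multiplying $h$ by $H_m(y')$ gives $H_m(y')(Y-y')\equiv d_m^2\,T$ modulo $h$ (up to the unit image of $s$), hence $d_m^2\bigl(w_1(T)-w_2(T)\bigr)=0$, and then regularity of $d_m$ in $A'$ (it is regular in $A$ and $u$ is flat) yields $w_1(T)=w_2(T)$ and so $w_1=w_2$. This is precisely the computation the paper performs; without it your concluding sentence is an assertion rather than a proof.
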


 \begin{proof} We will use induction on the dimension of $A$.
 \begin{enumerate}
 \item \textbf{Case I:}  $m=1$
\vskip 0.3 cm
If $m=1$ then $A$ and $A'$ are Cohen-Macaulay local rings of dimension $1$. Then we are done by Corollary $8$ \cite{KPP}.
\vskip 0.3 cm
\item \textbf{Case II:}  $m>1$
\vskip 0.3 cm
 By Theorem \ref{unith}, (1), $\rho^*$ is surjective. Let Now let $\bar A =A/(d_1^3, \ldots , d_{m-1}^3)$, and the map $\bar{v'}={\bar A} {\otimes}_{A} v': \bar{B}={\bar A} {\otimes}_{A}B \to \bar{A}'={\bar A} {\otimes}_{A}A'$. By the induction hypothesis there exists a standard smooth algebra $\bar{D}$ such that the maps $\bar w$ and $\bar w'$ restricted to $\bar D$ coincide. This implies that $w\mid_D$ and $w'\mid_D$ lift the same map $\bar w\mid_{\bar D}$. Thus $w\mid_D=w'\mid_D$ by uniqueness in the Implicit Function Theorem.

 By construction $C=E_{ss'}$, $E= D[Y,T]/(I,g,h)$ and
 $H_m(y')(w(Y)-w'(Y))\equiv d_m^2(w(T)-w'(T))\ \mbox{modulo}\ h$. Thus  $d_m^2(w(T)-w'(T))=0$ and so $w|_E=w'|_E$ because $d_m$ is regular in $A'$ since $d_m$ is regular in $A$ and $u$ is flat. It follows that  $w=w'$

   \item Apply Theorem \ref{unith} (2) for the surjectivity. The injectivity follows from above.

 \end{enumerate}

\hfill\ \end{proof}

\begin{Corollary}\label{c2} With the assumptions and notation of the above Corollary, the following statements hold:
\begin{enumerate}
  \item  If there exists an $A$-morphism ${\tilde v}:B\to A'$
 with ${\tilde v}\equiv v\ \mbox{modulo}\ (d_1^{3}, \ldots , d_m^3)A' $, then
 there exists a unique $A$-morphism ${\tilde w}:C\to A'$ such that ${\tilde w}\circ \rho={\tilde v}$.
  \item  If there exists an $A$-morphism ${\tilde v}:B\to A'$
 with ${\tilde v}\equiv v\ \mbox{modulo}\ \mm^{3k}A' $, then
 there exists a unique $A$-morphism ${\tilde w}:C\to A'$ such that ${\tilde w}\circ \rho={\tilde v}$.
\end{enumerate}
  \end{Corollary}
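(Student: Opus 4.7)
The plan is to derive Corollary \ref{c2} as a direct corollary of Corollary \ref{c1}; essentially nothing new needs to be proved, because both statements are just rephrasings of the bijectivity of $\rho^*$ in a form that emphasizes existence and uniqueness rather than a set-theoretic bijection.

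For part (1), I would begin by observing that the hypothesis $\tilde v\equiv v\ \mbox{modulo}\ (d_1^3,\ldots,d_m^3)A'$ is exactly the condition defining membership in the right-hand set of the bijection
\[
\rho^*\colon \{w\in \Hom_A(C,A'): w\circ \rho\equiv v \mod (d_1^3,\ldots,d_m^3)A'\}\longrightarrow \{v'\in \Hom_A(B,A'): v'\equiv v \mod (d_1^3,\ldots,d_m^3)A'\}
\]
from Corollary \ref{c1}(1). Thus $\tilde v$ lies in the codomain of $\rho^*$. Surjectivity of $\rho^*$ produces an $A$-morphism $\tilde w\colon C\to A'$ with $\tilde w\circ \rho=\tilde v$, and injectivity of $\rho^*$ guarantees that this $\tilde w$ is unique. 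Note that the condition $\tilde w\circ \rho \equiv v\ \mbox{modulo}\ (d_1^3,\ldots,d_m^3)A'$ is automatic once $\tilde w\circ \rho=\tilde v$ and $\tilde v\equiv v$ modulo the same ideal, so no compatibility check is needed.

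Part (2) then proceeds exactly as part (1), except that Corollary \ref{c1}(2) replaces Corollary \ref{c1}(1): the condition $\tilde v\equiv v\ \mbox{modulo}\ \mm^{3k}A'$ puts $\tilde v$ into the target set of the second bijection, and existence and uniqueness of $\tilde w$ come respectively from surjectivity and injectivity of that $\rho^*$.

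There is no real obstacle at this stage; all the substantive work has already been done in Theorem \ref{unith} (for surjectivity of $\rho^*$, which gives existence of $\tilde w$) and in Corollary \ref{c1} (for injectivity of $\rho^*$, which gives uniqueness; this was the nontrivial step, relying on the explicit form $C=E_{ss'}$ with $E=D[Y,T]/(I,g,h)$ and on the fact that $d_m$ is regular on $A'$). So the only task in the write-up is to match the hypotheses of Corollary \ref{c2} with the sets parametrized by Corollary \ref{c1} and to invoke bijectivity.
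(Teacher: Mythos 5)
Your proposal is correct and matches the paper's own argument, which simply sets ${\tilde w}={\rho^*}^{-1}(\tilde v)$ using the bijection from Corollary \ref{c1}; your extra remark that any $\tilde w$ with $\tilde w\circ\rho=\tilde v$ automatically lies in the domain of $\rho^*$ is exactly the small check needed to get uniqueness among all morphisms, and it is handled properly.
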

  For the proof take ${\tilde w}={\rho^*}^{-1}(\tilde v)$, where $\rho^*$ is defined in the  Corollary \ref{c1}.
\\
 By  construction, $C$ has the form $(D[T]/(g))_{Mh}$, where $M=\det (\partial g_i/\partial T_j)_{i,j\in [r]} $ and  $h=s'\in A[T]$ satisfies ${\tilde w}(h)\not \in \mm A'$. 

\begin{Lemma}\label{lem} There  exist canonical bijections

\begin{enumerate}
  \item $ (d_1^{3}, \ldots , d_m^3)A'^{n-r}\to  \{w'\in \Hom_A(C,A'):w'\equiv {\tilde w} \ \mbox{modulo}\ (d_1^{3}, \ldots , d_m^3)A'\}.$
  \item $ \mm^{3k} A'^{n-r}\to  \{w'\in \Hom_A(C,A'):w'\equiv {\tilde w} \ \mbox{modulo}\ \mm^{3k}A'\}.$
\end{enumerate}
\end{Lemma}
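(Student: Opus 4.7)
The approach is to apply the Implicit Function Theorem to a standard smooth presentation of $C$ over $A$. Writing $C\cong (A[T_1,\dots,T_n]/(g_1,\dots,g_r))_{Mh}$ with $M=\det(\partial g_i/\partial T_j)_{i,j\in[r]}$ a unit at $\tilde w$ and $\tilde w(h)\in(A')^{\times}$ — this is the $A$-level standard smooth presentation obtained by combining the presentation of $D$ with the $T$-variables of Proposition \ref{proposition} — an $A$-morphism $w'\colon C\to A'$ amounts exactly to an $n$-tuple $(w'(T_1),\dots,w'(T_n))\in(A')^n$ satisfying $g_i(w'(T))=0$ and leaving $Mh$ a unit. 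Let $J$ denote $(d_1^3,\dots,d_m^3)A'$ in case (1) and $\mm^{3k}A'$ in case (2); since each $d_i\in\mm$ we have $J\subseteq\mm A'$ in both cases.

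In the forward direction, given $(a_{r+1},\dots,a_n)\in J^{n-r}$, I set $w'(T_j)=\tilde w(T_j)+a_j$ for $j>r$ and invoke the Implicit Function Theorem (valid because $A'$ is Henselian and $M(\tilde w)$ is a unit) to produce unique $x_1,\dots,x_r\in A'$ with $x_j\equiv\tilde w(T_j)\bmod\mm A'$ and $g_i(x_1,\dots,x_r,\tilde w(T_{r+1})+a_{r+1},\dots,\tilde w(T_n)+a_n)=0$; I then set $w'(T_j)=x_j$ for $j\le r$. Since $w'(T)$ stays $\mm A'$-close to $\tilde w(T)$, the element $M(w'(T))h(w'(T))$ remains a unit, so $w'$ is a bona fide morphism. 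The inverse map sends $w'$ (with $w'\equiv\tilde w\bmod J$) to $(w'(T_{r+1})-\tilde w(T_{r+1}),\dots,w'(T_n)-\tilde w(T_n))\in J^{n-r}$. Mutual inverseness follows from the uniqueness clause of the Implicit Function Theorem: the free coordinates, together with $g_i=0$ and the proximity $w'(T_j)\equiv\tilde w(T_j)\bmod\mm A'$, pin down the solved coordinates.

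The main obstacle is showing that the solved corrections $b_j:=x_j-\tilde w(T_j)$ for $j\le r$ lie in $J$, not merely in $\mm A'$; only then does the forward map actually land in the set of $w'$ satisfying $w'\equiv\tilde w\bmod J$. This will follow from the strong form of Hensel's lemma applied to the unknowns $(b_1,\dots,b_r)$: the initial defect expands by Taylor to $\sum_{j>r}(\partial g_i/\partial T_j)(\tilde w)\,a_j+O(a^2)\in J$, while the Jacobian in the $b$-variables is the unit $M(\tilde w)\in(A')^{\times}$, so the Henselian solution lies in $J^r$ as required.
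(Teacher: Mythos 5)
Your proposal is correct and follows essentially the same route as the paper: identify $A$-morphisms $C\to A'$ near $\tilde w$ with solutions $t$ of $g(t)=0$ congruent to $\tilde w(T)$, take the last $n-r$ coordinates as free parameters in $(d_1^3,\ldots,d_m^3)A'^{n-r}$ (resp. $\mm^{3k}A'^{n-r}$), and solve uniquely for the first $r$ coordinates by the Implicit Function Theorem over the Henselian ring $A'$. The only difference is that you spell out, via the Taylor expansion and the strong (ideal-precision) form of Hensel's lemma, why the solved coordinates stay congruent to $\tilde w$ modulo the given ideal and not merely modulo $\mm A'$ --- a detail the paper leaves implicit in its appeal to the Implicit Function Theorem.
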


\begin{proof}
Note that $ \{w'\in \Hom_A(C,A'):w'\equiv {\tilde w} \ \mbox{modulo}\ (d_1^{3}, \ldots , d_m^3)A'\}$ is in bijection with the set of all $t\in A'^n$ such that $g(t)=0$ and $t\equiv {\tilde w}(T)$ modulo $(d_1^3,\ldots,d_m^3) A'^n$.

Set $V=(T_1,\ldots,T_r)$, $Z=(T_{r+1},\ldots,T_n)$.
Thus $ g(U, w'(Z))=0$ has a unique solution  (namely $U= w'(V)$) in ${\tilde  w }(Z)+(d_1^{3}, \ldots , d_{m}^3) A'^{n-r}$ by the Implicit Function Theorem.
Consequently, $w'(V)$ is uniquely defined by $w'(Z)$, that is by the restriction $w'\mid_{ A[Z]}$.

Therefore, $ \{w'\in \Hom_A(C,A'):w'\equiv {\tilde w} \ \mbox{modulo}\ (d_1^{3}, \ldots , d_m^3)A'^n\}$ is in bijection with $\{w''\in \Hom_A(A[Z],A'):w''\equiv {\tilde w}|_{A[Z]} \ \mbox{modulo}\ (d_1^{3}, \ldots , d_m^3)A'^{n-r}\}$, the latter set being in bijection with ${\tilde w}(Z)+(d_1^{3}, \ldots , d_m^3)A'^{n-r}$, that is with $(d_1^{3}, \ldots , d_m^3)A'^{n-r}$.
 The proof of (2) goes similarly.
\hfill\ \end{proof}

\begin{Theorem} \label{arc1} With the assumptions and notation of Corollary \ref{c1} there exist canonical bijections
\begin{enumerate}
  \item $$(d_1^{3}, \ldots , d_m^3)A'^{n-r}\to \{v'\in \Hom_A(B,A'):v'\equiv v \ \mbox{modulo}\ (d_1^{3}, \ldots , d_m^3)A' \}.$$
  \item $$\mm^{3k}A'^{n-r}\to \{v'\in \Hom_A(B,A'):v'\equiv v \ \mbox{modulo}\ \mm^{3k}A' \}.$$
\end{enumerate}
 \end{Theorem}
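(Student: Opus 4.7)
The statement should follow by composing Lemma \ref{lem} with the bijection $\rho^*$ of Corollary \ref{c1}, using Corollary \ref{c2} to provide a distinguished basepoint. I first fix an honest lift of $v$: by Theorem \ref{unith}(3) we may realise $v$ as the reduction of some $A$-morphism $\tilde v\colon B\to A'$, and Corollary \ref{c2}(1) then produces a unique $\tilde w\colon C\to A'$ with $\tilde w\circ\rho=\tilde v$. This $\tilde w$ serves as the basepoint for all the parameterisations that follow.

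\textbf{Composition of the two bijections.} Lemma \ref{lem}(1) yields a bijection $\Phi$ from $(d_1^3,\ldots,d_m^3)A'^{n-r}$ onto $\mathcal W:=\{w'\in\Hom_A(C,A'):w'\equiv\tilde w\ \mbox{modulo}\ (d_1^3,\ldots,d_m^3)A'\}$. Post-composing with $\rho^*$ sends $\mathcal W$ into $\mathcal V:=\{v'\in\Hom_A(B,A'):v'\equiv v\ \mbox{modulo}\ (d_1^3,\ldots,d_m^3)A'\}$, since $w'\equiv\tilde w$ gives $w'\circ\rho\equiv\tilde w\circ\rho=\tilde v\equiv v$ modulo the same ideal. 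The composite $\rho^*\circ\Phi\colon (d_1^3,\ldots,d_m^3)A'^{n-r}\to\mathcal V$ is automatically injective: by Corollary \ref{c1}(1), $\rho^*$ is already injective on the larger set $\{w:w\circ\rho\equiv v\ \mbox{modulo}\ (d_1^3,\ldots,d_m^3)A'\}$, and $\mathcal W$ is contained in that set.

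\textbf{Surjectivity --- the main obstacle.} Given $v'\in\mathcal V$, Corollary \ref{c2}(1) (applied with the chosen lift $v'$ in place of $\tilde v$) produces a unique $w'\colon C\to A'$ with $w'\circ\rho=v'$; the crux of the proof is to show that this $w'$ already lies in $\mathcal W$, i.e.\ that $w'\equiv\tilde w$ modulo $(d_1^3,\ldots,d_m^3)A'$. I would exploit the explicit presentation $C\cong (D[T]/(g))_{Mh}$ from Proposition \ref{proposition}, with its defining relation $h=s(Y-y')-d^e\sum_j G_j(y')T_j$. The congruence $w'(Y)\equiv\tilde w(Y)$ modulo $(d_1^3,\ldots,d_m^3)A'$, which follows directly from $w'\circ\rho=v'\equiv v=\tilde w\circ\rho$, combined with $h=0$ and the identity $G_jH_j=M_jN_j\,\mathrm{Id}_n$ from (\ref{identity1}), forces $d^e(w'(T)-\tilde w(T))\equiv 0$ modulo $(d_1^3,\ldots,d_m^3)A'$ up to the invertible factor $s$. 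The choice of $e$ with $(0:_A d^e)=(0:_A d^{e+1})$ together with regularity of $d_m$ in $A'$ (the Cohen--Macaulay hypothesis plus flatness of $u$) permits cancellation of $d^e$, giving $w'(T)\equiv\tilde w(T)$ modulo the ideal. Propagating through the inductive construction of $C$ over $D$ --- exactly as in the proof of Corollary \ref{c1}, via the inductive hypothesis applied to $\bar A=A/(d_1^3,\ldots,d_{m-1}^3)$ --- establishes the corresponding congruence $w'|_D\equiv\tilde w|_D$, so $w'\equiv\tilde w$ on all of $C$ and surjectivity is proved. Part (2) is entirely analogous: replace $(d_1^3,\ldots,d_m^3)A'$ by $\mm^{3k}A'$ throughout, and invoke the part-(2) versions of Corollary \ref{c1}, Corollary \ref{c2} and Lemma \ref{lem}.
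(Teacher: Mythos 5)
You follow the paper's own route: its entire proof is the composition of Lemma \ref{lem} with the bijection $\rho^*$ of Corollary \ref{c1}, and your injectivity half is correct. The trouble is the step you yourself call the crux. For surjectivity you must show that, for $v'\equiv v$ modulo $(d_1^{3},\ldots,d_m^{3})A'$, the unique $w'$ with $w'\circ\rho=v'$ lies in the set $\{w'\in\Hom_A(C,A'):w'\equiv\tilde w \mbox{ modulo } (d_1^{3},\ldots,d_m^{3})A'\}$ parameterized by Lemma \ref{lem}, in particular $w'(T)\equiv\tilde w(T)$ modulo that ideal. Your cancellation does not give this. Granting your inductive claim $w'|_D\equiv\tilde w|_D$, the relation $h$ (with $e=1$, as in Section 2) together with $G_jH_j=M_jN_j\mathrm{Id}_n$ yields $d_m^{2}\bigl(w'(T)-\tilde w(T)\bigr)\equiv H(y')\bigl(w'(Y)-\tilde w(Y)\bigr)\equiv 0$ modulo $(d_1^{3},\ldots,d_m^{3})A'$ (note the factor is $d_m^{2}$, not $d^{e}$). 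Cancelling $d_m^{2}$ against this ideal only lands you in the colon ideal $\bigl((d_1^{3},\ldots,d_m^{3})A':d_m^{2}\bigr)=(d_1^{3},\ldots,d_{m-1}^{3},d_m)A'$, which is $(d)A'$ when $m=1$, because $d_1^{3},\ldots,d_{m-1}^{3},d_m^{2}$ is again a regular sequence; regularity of $d_m$ and the choice of $e$ give nothing stronger. So you obtain $w'(T)\equiv\tilde w(T)$ only modulo $(d_1^{3},\ldots,d_{m-1}^{3},d_m)A'$, not modulo $(d_1^{3},\ldots,d_m^{3})A'$. Moreover this is not a fixable loss of precision: if $v'(Y)-\tilde w(Y)$ has a component of exact $d_m$-order three, the identity $d_m^{2}\bigl(w'(T)-\tilde w(T)\bigr)=H(y')\bigl(v'(Y)-\tilde w(Y)\bigr)$ forces $w'(T)-\tilde w(T)$ to be divisible by $d_m$ but in general not by $d_m^{3}$, since $H(y')$ need not be divisible by $d_m^{2}$. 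Hence the membership you need for surjectivity typically fails, and your argument for the crux collapses; the exponent mismatch you ran into is real (the paper's one-line proof simply composes Corollary \ref{c1} with Lemma \ref{lem} and never addresses it), but it cannot be resolved by the computation you propose.

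A smaller inaccuracy: Theorem \ref{unith}(3) only produces $w:C\to A'$ with $w\circ\rho\equiv v$ modulo $\mm^{c}A'$, not modulo $(d_1^{3},\ldots,d_m^{3})A'$ or $\mm^{3k}A'$, so it cannot furnish your basepoint $\tilde v$. The existence of an $A$-morphism $\tilde v:B\to A'$ with $\tilde v\equiv v$ modulo the relevant ideal is the standing hypothesis of Corollary \ref{c2}, under which $\tilde w$ of Lemma \ref{lem} is defined, and Theorem \ref{arc1} tacitly inherits that hypothesis rather than deriving it from Theorem \ref{unith}.
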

For the proof apply Corollary \ref{c1} and the above lemma.

\section{Algorithms}
\vskip 0.5 cm
In this section we present the algorithms corresponding to the results of Sections 1 and 2.
We will use in our algorithm for uniform desingularization the following algorithm for the one dimensional case (cf. \cite{KPP}):
\vskip0.5 cm
\newpage
\begin{algorithm}[ht]\label{alg:UniformNeronDesing}
\begin{algorithmic}[1]
\REQUIRE $A, B, v, k, f, N$ given by the following data. $A=K[x]_{(x)}/J$, $J=(h_1, \ldots , h_{p'})$, $h_i \in K[x]$, $x=(x_1, \ldots , x_t)$, $\dim(A)=1$, $K$ a field. $B=A[Y]/I$, $I=(g_1, \ldots , g_l)$, $g_i \in K[x,Y]$, $Y=(Y_1, \ldots , Y_n)$, integer $k,c$, $f=(f_1, \ldots , f_r), f_i \in I$, $v:B \to A/m^c_A$ defined by $y' \in K[x]^n$, $N \in (f_1, \ldots , f_r):I.$
\ENSURE $(D, \pi)$ given by the following data. $D=(A[Z]/(g))_{hM}$ a standard smooth algebra, $Z=(Z_1, \ldots , Z_p)$, $g=(g_1, \ldots , g_q)$, $q\leq p$, $h\in A[Z]$, $M$ a $q \times q$ minor of $(\partial g/\partial Z)$, $\pi: B \to D$ given by $\pi(Y)$ and factorizing $v$ or the message ``$y', N, f_1, \ldots , f_r$ are not well chosen."
\newline
  \STATE Compute $M=\det((\partial f_i/\partial Y_j)_{i,j\in [r]})$, $P:=NM$ and $d:=P(y')$
  \STATE $f:= (f_1,\ldots,f_r)$
  \STATE Compute $e$ such that $(0:_Ad^e)=(0:_Ad^{e+1})$
  \IF{ $I(y')\nsubseteq (x)^{(2e+1)k} +J$ or $(x)^k \nsubseteq (d)+J$}
  \RETURN ``$y'$, $N$, $(f_1,\ldots,f_r)$ are not well chosen''
  \ENDIF
  \STATE Complete $(\partial f_i/\partial Y_j)_{i \leq r}$ by $(0| (\mbox{Id}_{n-r}))$ to obtain a square matrix $H$
  \STATE Compute $G'$ the adjoint matrix of $H$ and $G:=NG'$
  \STATE $h=Y-y'-d^e G(y')T,\  T=(T_1,\ldots, T_n)$
  \STATE Write $f(Y)-f(y')=  \sum_jd^e\partial f/\partial Y_j(y') G_j(y')T+d^{2e}Q$
  \STATE Write $f(y')=d^{e+1}a$
  \FOR{ $i=1$ to $r$}
  \STATE $g_i=a_i+T_i+d^{e-1}Q_i$
  \ENDFOR
  \STATE $E:=A[Y,T]/(I,g,h)$
  \STATE Compute $s$ the $r\times r$ minor defined by the first $r$ columns of $(\partial g/\partial T) $
  \STATE Write $P(y'+ d^eG(y')T)=ds'$
  \RETURN $E_{ss'}$.
\newline
\end{algorithmic}
\caption{UniformNeronDesingularizationDim1}
\end{algorithm}

\vskip 0.5 cm
Next we present the algorithm for uniform desingularization for the higher dimensional case.
\begin{algorithm}[ht]\label{algUniformNeron}
\begin{algorithmic}[1]
\REQUIRE $A, B, v, k, f, N$ given by the following data. 
$A=K[x]_{(x)}/J$, $J=(h_1, \ldots , h_{p'})$, $h_i \in K[x]$, $x=(x_1, \ldots , x_t)$, $\dim(A)=m$,
 $K$ a field. $B=A[Y]/I$, $I=(g_1, \ldots , g_l)$, $g_i \in K[x,Y]$, $Y=(Y_1, \ldots , Y_n)$, integer $k,c$, $f=(f_1, \ldots , f_r), f_i \in I$, $v:B
 \to A/\mm^c_A$ defined by $\bar{y} \in K[x]^n$, for $i \in [m]$, $f^{(i)}=(f^{(i)}_1, \ldots , f^{(i)}_{r_i})$,  $r_i \leq n$, $f^{(i)}_j\in I$, $N_i\in
 ((f^{(i)}):I)$, $M_i$ an $r_i \times r_i$ minor of $(\partial f^{(i)}/\partial Y)$.
\ENSURE $D, \pi$ given by the following data. $D=(A[Z]/(g))_{hM}$ a standard smooth, $Z=(Z_1, \ldots , Z_p)$, $g=(g_1, \ldots , g_q)$, $q\leq p$, $h\in A[Z]$, $M$ a $q \times q$ minor of $(\partial g/\partial Z)$, $\pi: B \to D$ given by $\pi(Y)=y'$ and factorizing $v$ or the message ``$y', f^{(i)}, N_i$ are not well chosen."
\newline
  \FOR{$i\in[m]$} \STATE $P_i:=M_iN_i$; $d_i:=P_i(\bar{y})$
  \ENDFOR
  \IF{$I(\bar{y})\nsubseteq (x)^{3k}+J$ or $(x)^k\nsubseteq (d_1, \ldots , d_m)+J$}
   \RETURN $"\bar{y}, f^{(i)}, N_i$ are not well chosen."
   \ENDIF
  \IF{$m=1$} 
  \RETURN UniformNeronDesingularizationDim1$(A, B, v, k, f^{(1)}, N_1)$
\ENDIF
  \STATE $\bar{A}:=A/(d_1^3, \ldots , d_{m-1}^3)$, $\bar{v}:=\bar{A}\otimes_A v$, $\bar{B}:=\bar{A}\otimes_A B$
  \STATE $(\bar{D},\bar{\pi}):= $ UniformNeronDesingularization $(\bar{A}, \bar{B}, \bar{v}, k, f^{(m)}, N_m,M_m)$\\
   $\bar{D}=(\bar{A}[Z]/(\bar{g}))_{\bar{h}\bar{M}}$, $Z=(Z_1, \ldots , Z_p)$, $g=(g_1, \ldots , g_q)$, $g_i \in K[x,Z]$\\
   $\bar{g}=g$ mod $(d_1^3, \ldots , d_{m-1}^3)$, $h \in k[x,Z]$, $\bar{h}=h$ mod $(d_1^3, \ldots , d_{m-1}^3)$, $M$ a $q \times q$ minor of $(\partial g/\partial Z)$, $\bar{M}= M$ mod $(d_1^3, \ldots , d_{m-1}^3)$, $\bar{\pi}(Y)=y'$
  \STATE $D:=(A[Z]/(g))_{hM} $
  \STATE Complete the Jacobian matrix associated to $f^{(m)}$ by rows of $0$ and $1$ to obtain a square matrix $H_m$ with $\det(H_m)=M_m$
  \STATE Compute $G_m'$ the adjoint matrix of $H_m$ and $G_m:= N_mG_m'$
  \STATE $P:=M_mN_m, d:=P(y'), f=f^{(m)}$, $r=r_m$
  \STATE Write $P(y')=ds$ for some $s\in D, s\equiv 1$ mod $d$
  \STATE $h:=s(Y-y')-d\sum_{j=1}^{r} G_j(y')T_j, T_j=(T_1,\ldots,T_r, T_{j,r+1},\ldots,T_{j,n})$
  \STATE $p=\max\{\deg({f_i}^{(m)})\}$
  \STATE  Write $s^p(f(Y)-f(y'))\equiv \sum_{j'}s^{p-1}d(\partial f/\partial Y_{j'})(y')\sum_{j=1}^q G_{jj'}(y')T_{jj'}+d^2Q$
mod $h$.
\STATE Write $f(y')=d^2b$, $b\in dD^r$.
\FOR{$i\in [r]$}
\STATE $g_i:=s^pb_i+s^pT_i+Q_i$.
\ENDFOR
\STATE $E:=D[Y,T]/(I,g,h)$.
\STATE Compute $s'$ the $r\times r$-minor of $(\partial g/\partial T)$ given by the first $r$ variables of $T$.
\STATE Compute $s''$ such that $P(y'+s^{-1}d\sum_{j=1}^qG_j(y')T_j)=ds''$.
\STATE Define $\pi:B\to (D[Y,T]/(I,g,h)_{ss's''}$ by $\pi(Y_i)=y'_i$.
\RETURN $((D[Y,T]/(I,g,h)_{ss's''},\pi)$.
\newline
\end{algorithmic}
\caption{UniformNeronDesingularization}
\end{algorithm}

\begin{algorithm}[ht]\label{alg:NeronDesing}
\caption{NeronDesingularization}
\small
\begin{algorithmic}[1]
\REQUIRE $A$, $B$, $v$, $N$ given by the following data:
$A=k[x]_{(x)}/J$, $J=(h_1,\ldots,h_q)\subset k[x]$, $x=(x_1,\ldots,x_t)$, $k$ a field, $k'=Q(k[U]/\bar{J})$, $\bar{J}=(a_1,\ldots,a_r)\subset k[U]$,
$U=(U_1,\ldots,U_{t'})$ separable over $k$, $B=A[Y]/I$, $I=(g_1,\ldots,g_l)\subset k[x,Y]$, $Y=(Y_1,\ldots,Y_n)$, $v:B\to A'\subset K[[x]]/JK[[x]]$
an $A$-morphism, given by $\bar{y}=(\bar{y}_1,\ldots,\bar{y}_n)\in k[x,U]^n$ approximations mod $(x)^N$ of $v(Y_i)$, $K\supset k'$ a field.
\ENSURE $(E,\pi)$ given by the following data:
$E=(A[Z]/L)_{hM}$ standard smooth, $Z=(Z_1,\ldots,Z_p)$, $L=(b_1,\ldots,b_{q'})\subset k[x,Z]$, $h\in k[x,Z]$, $M$ $q\times q$-minor of $(\partial b_i/\partial Z_j)$, $\pi:B\to E$ an $A$-morphism given by $\pi(Y_1)=y'_1, \ldots, \pi(Y_n)=y'_n$ factorizing $v$, i.e. there exists $\omega:E\to A'$ with $\omega \pi=v$.
\newline
\STATE Compute $w:=(a_{i_1},\ldots,a_{i_p})$,  $\rho$ a $p\times p$-minor of $(\partial a_{i_{\nu}}/\partial U_j)$ such that $\rho\not \in \bar{J}$.
 Compute $\tau\in (w):\bar{J}$ such that $k[U]_{\rho\tau}/(a_1,\ldots,a_r)=k[U]_{\rho\tau}/(w)$, $D:=A[U]_{\rho\tau}/(w)$.
\IF{If $\dim(A)=0$}
\STATE compute $f=(f_1,\ldots,f_r)$ in $I$, $N\in (f):I$ and $M$ an $r\times r$-minor of $(\partial f_i/\partial Y_j)$ such that $B_{NM}$ is standard smooth,
 return $((D[Y]/I)_{NM}, Y)$.
\ENDIF
\STATE Compute $H_{B/A}=(b_1,..,b_q)$
and  $\gamma_1,..,\gamma_m \in H_{B/A}(\bar{y})$, system of parameters in $A$.
\FOR{$j \in [m]$}
\STATE write $\gamma_j=\sum_{i=1}^qb_i(\bar{y})\bar{y}_{n+i+(j-1)q}$ mod $(\gamma_1^2,...,\gamma_m^2)$, $\bar{y}_j\in k[x,U]$.
\ENDFOR
\FOR{$j \in [m]$}
\STATE $g_{l+j}:=-\gamma_j+\sum_{i=1}^q b_iY_{n+i+(j-1)q}$; 
\ENDFOR
\STATE $Y:=(Y_1,\ldots,Y_{n+mq})$;
$\bar{y}=(\bar{y}_1\ldots,\bar{y}_{n+mq})$; $I:=(g_1,\ldots,g_{l+m})$; $l:=l+m$; $n:=n+mq$, $B:=A[Y]/I$, $\gamma:=\gamma_m$
\STATE  $B=S_B(I/I^2)$, $v$ trivially extended.
Write $B:=A[Y]/I$, $Y=(Y_1,\ldots,Y_n)$; $Y:=(Y,Z)$;  $I:=(I,Z)$, $B:=A[Y]/I$,
$Z=(Z_1,\ldots,Z_n)$, $v$ trivially extended.
\STATE Compute $f=(f_1,\ldots,f_r)$ such that a power $d$ of $\gamma$ is in $((f):I)\Delta_f$.
\STATE  Compute $e$ such that $(0:d^e)=(0:d^{e+1})$ and  $p:=\max_i\{\deg f_i\}$.
\STATE Choose $r\times r$-minors $M_i$ of $(\partial f/\partial Y)$ and $N_i\in ((f):I)$ such that for $P:=\sum M_iN_i$
we have $d\equiv P$ mod $I$.
\STATE Complete the Jacobian submatrices of $(\partial f/\partial Y)$ corresponding to  $M_i$ by $n-r$ rows of $0$ and $1$ to obtain square matrices $H_i$ with
$\det H_i=M_i$.
\FOR{$j\in [m]$ } \STATE compute $G'_j$ the adjoint matrix of $H_j$ and $G_j:=N_jG'_j$
\ENDFOR
\STATE $\bar{A}:=A/(d^{2e+1})$; $\bar{B}:=\bar{A}\otimes_AB$, $\bar{v}:=\bar{A}\otimes v$.
\STATE $(\bar{E},\bar{\pi})=$NeronDesingularization$(\bar{A},\bar{B},\bar{v},N)$,
$\bar{E}=(\bar{A}[Z_1,\ldots,Z_p]/{\bar L})_{hM}$ standard smooth, $L=(b_1,\ldots,b_q)\subset k[x,Z]$, $\bar{L}=L$ mod $d^{2e+1}$, $h\in k[x,Z]$, $M$ $q \times q$-minor of $(\partial b/\partial Z)$, $\bar{\pi}:\bar{B}\to \bar{E}$ factorization of $\bar{v}$ given by $y'$ from $k[x,Z]^n$.
\STATE $D:=A[Z]_{hM}/L$, write $P(y')=ds$,  $f(y')=d^{e+1}b$, $b\in d^eD^r$, $s\in D$, $d | s-1$.
\STATE $h:=s(Y-y')-d^e\sum_{i=1}^q G_j(y') T_j$, $T_j=(T_1,\ldots, T_r,T_{j,r+1},\ldots, T_{j,n})$.
\STATE Write $s^p(f(Y)-f(y'))\equiv \sum_{j'}s^{p-1}d^e(\partial f/\partial Y_{j'})(y')\sum_j G_{jj'}(y')T_{jj'}+d^eQ$ mod $h$.
\FOR{$i\in [r]$} \STATE $g_i:=s^pb_i+s^pT_i+d^{e-1}Q_i$.
\ENDFOR
\STATE $E:=D[Y,T]/(I,g,h)$.
\STATE Compute $s'$ the $r\times r$-minor of $(\partial g/\partial T)$ given by the first $r$ variables of $T$.
\STATE Compute $s''$ such that $P(y'+s^{-1}d^e\sum_{j=1}^qG_j(y')T_j)=ds''$.
\STATE Define $\pi:B\to (D[Y,T]/(I,g,h)_{ss's''}$ by $\pi(Y_i)=y'_i$.
\RETURN $((D[Y,T]/(I,g,h)_{ss's''},\pi)$.
\newline
\end{algorithmic}
\end{algorithm}

\clearpage

\end{document}